\newcommand{\iu}{\mathrm{i}\mkern1mu}
\newtheorem{theorem}{Theorem}
\newtheorem{lemma}{Lemma}
\newtheorem{definition}{Definition}
\newtheorem{corollary}{Corollary}
\begin{document}

\title{High-Order Approximation Rates for Shallow \\ Neural Networks with Cosine and ReLU$^k$ Activation Functions
}

\author{Jonathan W. Siegel \\
  Department of Mathematics\\
  Pennsylvania State University\\
  University Park, PA 16802 \\
  \texttt{jus1949@psu.edu} \\
  \And Jinchao Xu \\
  Department of Mathematics\\
  Pennsylvania State University\\
  University Park, PA 16802 \\
  \texttt{jxx1@psu.edu} \\
}

\maketitle

\begin{abstract}
 We study the approximation properties of shallow neural networks with an activation function which is a power of the rectified linear unit. Specifically, we consider the dependence of the approximation rate on the dimension and the smoothness in the spectral Barron space of the underlying function $f$ to be approximated. We show that as the smoothness index $s$ of $f$ increases, shallow neural networks with ReLU$^k$ activation function obtain an improved approximation rate up to a best possible rate of $O(n^{-(k+1)}\log(n))$ in $L^2$, independent of the dimension $d$. The significance of this result is that the activation function ReLU$^k$ is fixed independent of the dimension, while for classical methods the degree of polynomial approximation or the smoothness of the wavelets used would have to increase in order to take advantage of the dimension dependent smoothness of $f$. In addition, we derive improved approximation rates for shallow neural networks with cosine activation function on the spectral Barron space. Finally, we prove lower bounds showing that the approximation rates attained are optimal under the given assumptions.
\end{abstract}

\textbf{Keywords}:
 Neural Networks, Approximation Rates, Approximation Lower Bounds, Finite Element Methods

\section{Introduction}
We consider approximating a function $f:\Omega\rightarrow \mathbb{C}$, where $\Omega\subset \mathbb{R}^d$ is a bounded domain, by a superposition of ridge functions of the form
\begin{equation}\label{ridge-superposition}
 f_n(x) = \sum_{i=1}^n a_i\sigma(\omega_i\cdot x + b_i),
\end{equation}
with activation function $\sigma = \cos(x)$ or $\sigma = [\max(0,x)]^k$ for $k\geq 0$. The latter case corresponds to a neural network with a single hidden layer and activation function given by a power of a rectified linear unit \cite{nair2010rectified}, or rectified power unit \cite{li2019better}. In the case where $k=0$, $[\max(0,x)]^0$ is interpreted as the Heaviside function and we may also use any sigmoidal activation function in its place \cite{barron1993universal}. Approximation by functions of the form \eqref{ridge-superposition} has recieved a significant amount of attention in the literature. For instance, it has been shown that as long as $\sigma$ is not a polynomial, functions of the form \eqref{ridge-superposition} are dense in $C(\Omega)$ \cite{attali1997approximations,leshno1993multilayer,ellacott1994aspects} and $C^k(\Omega)$ \cite{hornik1991approximation,hornik1993some}, and in \cite{costarelli2015approximation,costarelli2013approximation} explicit operators realizing the approximation for any absolutely continuous $f$ are constructed.

Beyond the problem of density, we are interested in determining rates of approximation for a given class of functions $f:\Omega\rightarrow \mathbb{C}$ by an expression of the form \eqref{ridge-superposition} with respect to the Sobolev norm $H^m(\Omega)$. Typical results of this type consider functions $f$ which either satisfy classical smoothness assumptions, such as membership in a suitable Sobolev space, or non-standard smoothness assumptions coming from the theory of non-linear approximation by a dictionary.

For example, results for functions $f$ in high-order Sobolev spaces have been obtained in \cite{petrushev1998approximation}. Here it is shown that if the activation function $\sigma$ achieves approximation order $O(n^{-r})$ for one-dimensional functions $f\in H^r([0,1])$, then an approximation rate of $O(n^{-\frac{1}{2}-\frac{2r-1}{2d}})$ can be attained for high dimensional functions $f\in H^{\frac{d-1}{2} + r}(B^d)$ on the unit ball with respect to $L^2$.  In \cite{maiorov1999best} the case where each term in \eqref{ridge-superposition} may have a different profile $\sigma$ is considered and the optimal rates in this case are derived for all Sobolev spaces $H^r$ in $\mathbb{R}^d$. This result is generalized in \cite{gordon2001best,maiorov2010best} to general $L^p$ spaces.

A typical example of a non-standard smoothness assumption is membership in the closed convex hull of a suitable bounded dictionary $\mathbb{D}\subset H^m(\Omega)$, specifically one considers
\begin{equation}
 f\in B_1(\mathbb{D}) = \overline{\left\{\sum_{i=1}^n a_id_i,~d_i\in \mathbb{D},~n\in \mathbb{N},~\sum_{i=1}^n |a_i| = 1\right\}}.
\end{equation}
One can also characterize this set using the guage norm (see, for instance \cite{rockafellar1970convex}) of $B_1(\mathbb{D})$, which we denote by $\mathcal{K}_1(\mathbb{D})$ (following the notation from \cite{devore1998nonlinear})
\begin{equation}
\|f\|_{\mathcal{K}_1(\mathbb{D})} = \inf\{c > 0:~f\in cB_1(\mathbb{D})\}.
\end{equation}
Here $B_1(\mathbb{D})$ is exactly the unit ball in the norm $\mathcal{K}_1(\mathbb{D})$.
For this class of functions, one considers non-linear approximation by finite dictionary expansions, i.e. approximation form the set
\begin{equation}
 \Sigma_n(\mathbb{D}) = \left\{\sum_{i=1}^n a_id_i,~d_i\in \mathbb{D}\right\}.
\end{equation}
When the dictionary $\mathbb{D}$ is of the form
\begin{equation}
\mathbb{D}_\sigma = \{\sigma(\omega\cdot x + b),~\omega\in \mathbb{R}^d,~b\in\mathbb{R}\},
\end{equation}
this exactly corresponds to an expansion of the form \eqref{ridge-superposition}. For some activation functions, such as $[\max(0,x)]^k$ for $k > 0$, the dictionary $\mathbb{D}_\sigma$ is not bounded. In this case, the dictionary must be modified (details can be found in \cite{siegel2021characterization,siegel2021sharp}) and the resulting space, called the Barron space \cite{ma2019barron} or variation space corresponding to shallow ReLU networks \cite{bach2017breaking} when $k=1$, is closely related to the ridgelet spaces introduced in \cite{candes1998ridgelets} (to be precise, it is sandwiched between $R_{1,1}^{1+k+(d-1)/2}$ and $R_{1,\infty}^{1+k+(d-1)/2}$, see section 4.2 in \cite{candes1998ridgelets}).

Using a classical probabilistic argument of Maurey \cite{pisier1981remarques}, an approximation rate of $O(n^{-\frac{1}{2}})$ can be obtained for the class $B_1(\mathbb{D})$ using non-linear dictionary expansions. Moreover, Jones \cite{jones1992simple} gave a constructive proof of this fact using the relaxed greedy algorithm and applied this result to shallow neural networks with a cosine activation function. Improvements upon this rate of dictionary approximation under an assumption about the behavior of the relaxed greedy algorithm appear in \cite{kurkova2008geometric,lavretsky2002geometric}. These results yield exponential rates of convergence for individual functions in the convex hull of $\mathbb{D}$ (but not necessarily its closure), which are however not uniform over the class $B_1(\mathbb{D})$. Further, under compactness \cite{makovoz1996random,klusowski2018approximation} or smoothness \cite{siegel2021sharp} assumptions on the dictionary $\mathbb{D}$ improved rates can also be obtained, although for general dictionaries the Maurey-Jones rate is the best one can expect \cite{kurkova2001bounds}.

The application of the Jones-Maurey result to neural networks with sigmoidal activation function is due to Barron \cite{barron1993universal}. In this work, the relevant class of functions is
\begin{equation}\label{spectral-barron}
 \mathcal{B}^s(\Omega) = \left\{f:\Omega\rightarrow \mathbb{C}:~\|f\|_{\mathcal{B}^s(\Omega)} := \inf_{f_e|\Omega = f} \int_{\mathbb{R}^d}(1 + |\xi|)^s|\hat{f}_e(\xi)|d\xi < \infty\right\},
\end{equation}
where the infimum above is over extensions $f_e\in L^1(\mathbb{R}^d)$. Barron introduced this class for $s = 1$ and showed that for a sigmoidal activation function $\sigma$ we have $\mathcal{K}_1(\mathbb{D}_\sigma)\supset \mathcal{B}^1(\Omega)$. This shows that shallow neural networks with sigmoidal activation function can approximate functions satisfying a certain Fourier integrability condition with a rate of $O(n^{-\frac{1}{2}})$. This convergence rate for $f\in \mathcal{B}^1(\Omega)$ has been extended to a very general class of activation functions in \cite{hornik1994degree,siegel2020approximation}. A variety of other results for functions in the spectral Barron space \eqref{spectral-barron} have been obtained in \cite{klusowski2018approximation,bresler2020sharp,ma2019barron,CiCP-28-1707,makovoz1996random}, for instance.

It has been shown that the space $\mathcal{B}^s(\Omega)$ is exactly equivalent (with identical norm) to $B_1(\mathbb{F}_s^d)$ \cite{siegel2021characterization} for the dictionary
\begin{equation}
 \mathbb{F}_s^d = \{(1+|\omega|)^{-s}e^{2\pi \iu \omega\cdot x}:~\omega\in \mathbb{R}^d\}
\end{equation}
of decaying Fourier modes. There is a slight sublety here. Let $\Omega = [0,1]^d$, which is the case we are primarily interested in. Note that the frequency $\omega\in \mathbb{R}^d$ is allowed to be arbitrary. This results in a larger space than restricting $\omega\in \mathbb{Z}^d$, i.e. considering the dictionary
\begin{equation}
 \overline{\mathbb{F}_s^d} = \{(1+|\omega|)^{-s}e^{2\pi \iu \omega\cdot x}:~\omega\in \mathbb{Z}^d\},
\end{equation}
which is done for instance in \cite{lu2021priori}. Restricting $\omega$ to the integer lattice $\mathbb{Z}^d$ results in a much simpler space, since we have (see \cite{candes1998ridgelets}, section 7.2)
\begin{equation}
 \|f\|_{\mathcal{K}_1(\overline{\mathbb{F}_s^d})} \eqsim \sum_{n\in \mathbb{Z}^d} (1+|n|)^s||\hat{f}(n)|.
\end{equation}
However, by considering a pure frequency with $\omega\notin \mathbb{Z}^d$ and calculating its Fourier series, we can easily see that the above characterization does not hold for $\mathcal{B}^s(\Omega)$ and that the space $\mathcal{B}^s(\Omega)$ is strictly larger.

An interesting fact, first observed by Makovoz \cite{makovoz1996random}, is that for certain activation functions $\sigma$, the rate of approximation $O(n^{-\frac{1}{2}})$ derived by Barron \cite{barron1993universal} can be improved. In particular, Makovoz shows that for the Heaviside activation function 
\begin{equation}
\sigma = [\max(0,x)]^0 := \begin{cases} 
      0 & x\leq 0 \\
      1 & x > 0
   \end{cases}
\end{equation} 
the rate can be improved to $O(n^{-\frac{1}{2}-\frac{1}{2d}})$. 
Furthermore, when $\sigma = \max(0,x)^k$ for $k \geq 1$, this can be improved to
a rate of $O(n^{-\frac{1}{2}-\frac{2k+1}{2d}})$ \cite{klusowski2018approximation,CiCP-28-1707,siegel2021sharp} for the class $\mathcal{B}^{k+1}(\Omega)$. For functions $f\in \mathcal{B}^{1}(\Omega)$, improved rates have also been obtained for more general activation functions \cite{siegel2020approximation}. It has also been shown that the orthogonal greedy algorithm \cite{pati1993orthogonal} can constructively obtain such improved approximation rates \cite{siegel2021improved}.

In this work, we study how much further the rate of $O(n^{-\frac{1}{2}})$ can be improved given stronger assumptions on the smoothness of $f$, i.e. assuming that $f\in\mathcal{B}^s(\Omega)$ for larger values of $s$. 
By showing that the continuous Fourier transform of a function $f$ on the bounded set $\Omega$ can be replaced by a suitably chosen Fourier series, we show in Theorems \ref{approximation-rate-theorem} and \ref{spectral-convergence-theorem} that the approximation rates in $L^2$ with $\sigma = \cos(x)$ can be improved to $O(n^{-\frac{1}{2}-\frac{s}{d}})$ for $f\in\mathcal{B}^s(\Omega)$ with increasing $s$, and even that exponential convergence can be attained if the Fourier transform of $f$ decays rapidly enough.

To compare with the results in \cite{petrushev1998approximation}, we observe that $\sigma = \cos(x)$ attains approximation order $O(n^{-r})$ in $H^r([0,1])$ for any $r > 0$. This means that a rate of approximation of $O(n^{-\frac{1}{2}-\frac{s}{d}})$ for cosine networks already appear in \cite{petrushev1998approximation} for the Sobolev spaces $H^{\frac{d}{2} + s}(\Omega)$. However, our results apply to the spectral Barron space, which is not quite comparable, although we have $H^{\frac{d}{2} + s + \epsilon}(\Omega)\subset \mathcal{B}^s(\Omega)$ (see \cite{CiCP-28-1707} Lemma 2.5, for instance).

Further, comparing with approximation by ridgelets \cite{candes1998ridgelets}, we see from the results of Barron \cite{barron1993universal} and section 4.2 in \cite{candes1998ridgelets} that the space $B^s(\Omega)$ is contained in $R_{1,\infty}^{1+k+(d-1)/2}$ if $s \geq k + 1$. Thus, using ridgelets one can obtain a rate of $O(n^{-\frac{1}{2}-\frac{2s-1}{2d}})$, which is not quite as good as the rate attainable using cosine networks. We wish to emphasize that this approximation rate is not entirely trivial since we are considering the full spectral Barron space $B^s(\Omega)$ and not the space with frequencies restricted to a lattice, as in \cite{lu2021priori,candes1998ridgelets}.

Next, we consider the problem with activation function $\sigma(x) = \max(0,x)^k$. In Theorems \ref{piecewise-poly-approx-theorem} and \ref{high-smoothness-approximation} we show that in this case the convergence rate in $H^m(\Omega)$ can also be continuously improved with increasing $s$, up to the limit of 
\begin{equation}\label{best-rate-polynomials}
\|f-f_n\|_{H^m(\Omega)} \lesssim \|f\|_{\mathcal{B}^s}n^{m-(k+1)}\log{n},
\end{equation}
which is achieved when $s > (d+1)(k-m+\frac{1}{2})+m+\frac{1}{2}$. This maximal rate is the same as the rate attained by piecewise polynomials of degree $k$ in dimension $d=1$, and is significantly higher than the maximal rate obtained in \cite{petrushev1998approximation} in this case. In particular, this shows that regardless of the dimension $d$, for smooth enough functions $f$ (here the necessary amount of smoothness depends upon $d$), neural networks of the form \eqref{ridge-superposition} attain approximation rates which match the best possible rates in one dimension. 

High-order approximation rates for deeper networks have been studied in \cite{yarotsky2017error,yarotsky2018optimal,yarotsky2020phase}, however these results sometimes involve architectures which depend on the desired accuracy or even the function to be approximated. A theory of approximation by deeper networks in one dimension has also been developed in \cite{daubechies2019nonlinear}. In addition, approximation by deep networks with ReLU$^k$, or RePU, activation function has been studied in \cite{li2019better}. In contrast, our results apply already to shallow networks and show that high order approximation rates can be obtained for a class of sufficiently smooth functions even in high dimensions. A further interesting consequence, which is collected in Theorem \ref{improved-barron}, is that the approximation results obtained for sigmoidal activation functions by Barron \cite{barron1993universal} actually hold under weaker regularity conditions on the function $f$. In particular, instead of $f\in \mathcal{B}^1(\Omega)$ we only need $f\in \mathcal{B}^{\frac{1}{2}}(\Omega)$, although this comes at the cost of a constant which depends exponentially upon the dimension.

Compared with other results in the literature, the main significance of our results is that the smoothness of the activation function is fixed independently of the dimension. In particular, if the target class of functions is very smooth, say $f\in H^k(\Omega)$ with $k$ growing linearly in the dimension $d$, then finite elements \cite{brenner2007mathematical} of a sufficiently high degree or wavelets with a sufficiently high degree of smoothness can attain approximation rates which do not depend upon $d$ (since $k$ grows with $d$) \cite{devore1998nonlinear,daubechies1992ten}. For a space of functions perhaps more comparable to the spectral Barron class, we may also consider the ridgelet space $R_{p,q}^s$ for large $s$ depending linearly upon the dimension. In this case as well, dimension independent (again since $s$ depends upon $d$) can be obtained using ridgelets \cite{candes1998ridgelets}.

Another method which is particularly effective for high-dimensional problems is the sparse grids method \cite{bungartz2004sparse}. The sparse grid method approximates functions $f$ from the class 
\begin{equation}\label{sparse-grids-space}
\|f\|^2_{H^{k+1}_{mix}}:= \sum_{|\alpha|_\infty \leq k+1}\int_{\Omega} \left|\frac{\partial^{|\alpha|_1}}{\partial \alpha_1\cdots \partial\alpha_n} f(x)\right|^2dx < \infty
\end{equation}
by linear combinations of tensor products of piecewise degree $k$ polynomials on one-dimensinal grids with different resolutions in each coordinate direction. As such, the sparse grid method approximates $f$ with a piecewise polynomial function of degree $kd$. Using this method, an approximation rate in $L^2(\Omega)$ of
\begin{equation}\label{sparse-grids-estimate}
 \|f-f_n\|_{L^2(\Omega)} \lesssim n^{-(k+1)}(\log{n})^{(k+2)(d-1)}
\end{equation}
can be attained, where $n$ is the number of degrees of freedom \cite{bungartz2004sparse}. Thus, using polynomials of degree $kd$, the sparse grids method is able to attain similar approximation rates as shallow neural networks under high order smoothness assumptions. One additional potential advantage of the shallow networks is that the spectral Barron space is isotropic, while the sparse grids space is not.

However, in all of these examples the degree of the polynomials or the smoothness of the wavelets and ridgelets must grow with the dimension, which may be inconvenient, since for instance constructing conforming finite element spaces of high degree is a difficult problem \cite{lai2007spline,vzenivsek1970interpolation,argyris1968tuba,bramble1970triangular}. In contrast, for our results the activation function $\sigma$ is fixed independently of the dimension and so we are able to achieve these approximation rates using piecewise polynomials of a fixed degree. The reason this is possible is that the number of components of the piecewise polynomial represented by an expression of the form \eqref{ridge-superposition} grows as $n^d$ (the number of components obtained by cutting $\mathbb{R}^d$ by $n$ hyperplanes) while the number of parameters grows as $dn$. This suggests that perhaps (shallow) neural networks are particularly effective at approximating highly smooth functions in high dimensions.

This is the sense in which our results show that shallow neural networks overcome the ``curse of dimensionality''. It is by enabling the use of fixed degree polynomials even in high dimensions. The space of functions which we approximate, $\mathcal{B}^s(\Omega)$, is indeed very small and consists of highly smooth functions. 

However, in high dimensions $d$ the metric entropy of classical smoothness spaces with smoothness degree $s$ decays very slowly, specifically like $O(n^{-\frac{s}{d}})$ (see for instance \cite{lorentz1996constructive}, chapter 15). Consequently this class of functions fundamentally cannot be approximated efficiently in high-dimensions. To overcome this, many learning and approximation methods consider classes of functions whose smoothness either grows with dimension or takes a non-standard form. Examples of such spaces include the sparse grids spaces $H^{k+1}_{mix}$, the reproducting kernel spaces associated with kernel methods \cite{bengio2006curse}, the ridgelet spaces \cite{candes1998ridgelets}, and convex hulls of dictionaries. In this sense, the curse of dimensionality is overcome in a similar manner to other methods, namely by considering sufficiently smooth functions.

The preceding analysis suggests that the adaptive nature of the grid underlying a ReLU$^k$ network allows for a significantly better approximation rate than existing linear methods based on fixed grids. This suggests the potential of using such spaces for the solution of differential equations, which has been investigated in \cite{CiCP-28-1707}. In light of these results, the space of functions represented by shallow ReLU$^k$ networks may also be useful in understanding non-linear approximation by piecewise polynomials more generally, which has been a challenging problem \cite{devore1998nonlinear}.

The paper is organized as follows. In the next section, we consider approximation by networks with a cosine activation function. In addition to being of independent interest, key results in this section concerning the representation of functions in $\mathcal{B}^s(\Omega)$ will be used throughout the paper. Then, in section \ref{relu-result-section} we prove the main results concerning approximation by ReLU$^k$ networks. In section \ref{lower-bounds-section} we show that the results obtained in the previous sections are optimal. Finally, we give concluding remarks and further research directions.

\section{Approximation Rates for Cosine Networks}
To begin, we remark that throughout this manuscript, we use the following convention for the Fourier transform
\begin{equation}
 \hat{f}(\xi) = \int_{\mathbb{R}^d} f(x)e^{-2\pi \iu \xi\cdot x}dx,
\end{equation}
for which the inverse transform is given by
\begin{equation}
 f(x) = \int_{\mathbb{R}^d} \hat{f}(\xi)e^{2\pi \iu    \xi\cdot x}d\xi.
\end{equation}
We find that this convention results in the cleanest arguments, avoiding the necessity to keep track of normalizing constants.

In this section, we analyze the approximation properties of networks with a cosine activation function on the spectral Barron space $\mathcal{B}^s(\Omega)$. Specifically, consider approximating a function $f\in \mathcal{B}^s(\Omega)$ by a superposition of finitely many complex exponentials with coefficients that are bounded in $\ell^1$, i.e. by an element of the set
\begin{equation}
 \Sigma_{n,M} = \left\{\sum_{j=1}^n a_je^{2\pi \iu  \theta_j\cdot x}:~\theta_j\in \mathbb{R}^d,~a_j\in\mathbb{C},~\sum_{i=1}^n|a_i|\leq M\right\}.
\end{equation}

Alternatively, one can view this as the set of neural networks with a single hidden layer containing $n$ neurons with activation function $\sigma(x) = e^{2\pi \iu  x}$, whose weights are bounded in $\ell^1$.

Equivalently, we can consider approximation by networks with a cosine activation function
\begin{equation}
  \Sigma^{\cos}_{n,M} = \left\{\sum_{i=1}^n a_i\cos(2\pi\theta_i\cdot x + b_i):~\theta_i\in \mathbb{R}^d,~b_i\in \mathbb{R},~\sum_{i=1}^n|a_i| \leq M\right\}.
 \end{equation}
This is because 
\begin{equation}
e^{2\pi \iu  \theta_i\cdot x} = \cos(2\pi\theta_i\cdot x) + i\cos\left(2\pi\theta_i\cdot x - \frac{\pi}{2}\right)\in \Sigma^{\cos}_{2,2}
\end{equation}
and
\begin{equation}
 \cos(2\pi\theta_i\cdot x) = \frac{1}{2}e^{2\pi \iu  \theta_i\cdot x} + \frac{1}{2}e^{-2\pi \iu  \theta_i\cdot x}\in \Sigma^d_{2,1}.
\end{equation}
Thus we have $\Sigma_{n,M}\subset \Sigma^{\cos}_{2n,2M}$ and $\Sigma^{\cos}_{n,M} \subset \Sigma_{2n,M}$ and so the rates obtained for both sets will be the same. In what follows, we consider $\Sigma_{n,M}$ for convenience in dealing with the Fourier transform.

Let us first state the following simple result that can be obtained by following a calculation given in Section 3 of \cite{johnson2015saddle}. 
\begin{lemma} Given $\alpha>1$, consider
 \begin{equation}\label{alpha-g}
  g(t) = \begin{cases} 
      e^{-(1-t^2)^{1 - \alpha}} & t\in (-1,1) \\
      0 & \text{otherwise}.
   \end{cases}
 \end{equation}
then there is a constant $c_\alpha$ such that
 \begin{equation}\label{eq_181}
  |\hat{g}(\xi)|\lesssim e^{-c_\alpha|\xi|^{1-\alpha^{-1}}},
 \end{equation}
\end{lemma}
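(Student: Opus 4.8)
The plan is to prove \eqref{eq_181} by deforming the path of integration into the lower half-plane in the direction of steepest descent, which is the computation carried out in Section~3 of \cite{johnson2015saddle}. Since $g$ is real and even, so is $\hat g$, and since $g$ is bounded with compact support, $|\hat g(\xi)|\le\|g\|_{L^1}$; hence it suffices to bound $\hat g(\xi)=\int_{-1}^1 g(t)e^{-2\pi\iu\xi t}\,dt$ for $\xi>0$ large. The structural input is that, with the principal branch, $z\mapsto(1-z^2)^{1-\alpha}$ is holomorphic on $\mathbb{C}\setminus\bigl((-\infty,-1]\cup[1,\infty)\bigr)$, so $g(z)=e^{-(1-z^2)^{1-\alpha}}$ is holomorphic there; moreover, when $z\to\pm1$ with $\arg(1\mp z)$ confined to a sector of half-width less than $\tfrac{\pi}{2(\alpha-1)}$, one has $\operatorname{Re}\bigl((1-z^2)^{1-\alpha}\bigr)\to+\infty$, so $g(z)\to0$ and $g$ extends continuously by $0$ to $z=\pm1$ from within the lower half-plane.

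I would fix a small angle $\theta_0\in\bigl(0,\tfrac{\pi}{2(\alpha-1)}\bigr)$ and, by Cauchy's theorem, replace $[-1,1]$ by the broken line $\gamma$ running from $-1$ down to $-\iu\tan\theta_0$ and back up to $1$; the integrand is holomorphic in the triangle these enclose and continuous up to its closure (vanishing at $\pm1$), so a routine exhaustion of the triangle justifies this. On the arc of $\gamma$ near $z=1$, writing $z=1-re^{\iu\theta_0}$ with $0\le r\le\sec\theta_0$, one has $1-z^2=re^{\iu\theta_0}(2-re^{\iu\theta_0})$, so $\arg(1-z^2)$ never leaves $[0,\theta_0]$ and $|1-z^2|\le 4r$; since $\cos((\alpha-1)\theta_0)>0$ this gives $\operatorname{Re}\bigl((1-z^2)^{1-\alpha}\bigr)\ge c_1 r^{1-\alpha}$ for a constant $c_1=c_1(\theta_0,\alpha)>0$, hence $|g(z)|\le e^{-c_1 r^{1-\alpha}}$, while $|e^{-2\pi\iu\xi z}|=e^{2\pi\xi\operatorname{Im}z}=e^{-2\pi\xi\sin\theta_0\, r}$. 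Using this and the symmetric estimate near $z=-1$,
\[
|\hat g(\xi)|=\Bigl|\int_\gamma g(z)e^{-2\pi\iu\xi z}\,dz\Bigr|\le 2\int_0^{\sec\theta_0}\exp\bigl(-c_1 r^{1-\alpha}-2\pi\xi\sin\theta_0\, r\bigr)\,dr .
\]
The exponent is a convex function of $r$ whose minimum over $r>0$ is attained at $r_\ast=\bigl(\tfrac{c_1(\alpha-1)}{2\pi\xi\sin\theta_0}\bigr)^{1/\alpha}$, where it equals $\tfrac{\alpha}{\alpha-1}\cdot 2\pi\xi\sin\theta_0\cdot r_\ast=c_\alpha\,\xi^{1-1/\alpha}$ for an explicit $c_\alpha>0$; since the constrained minimum over $[0,\sec\theta_0]$ is no smaller, the integral is at most $\sec\theta_0\cdot e^{-c_\alpha\xi^{1-1/\alpha}}$, and for bounded $\xi$ the bound $\|g\|_{L^1}$ suffices. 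This gives \eqref{eq_181}.

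The computation is elementary once the contour is fixed — Cauchy's theorem plus minimizing a one-variable convex function — so the single point that needs care, and the origin of the exponent $1-\alpha^{-1}$, is respecting the branch of $(1-z^2)^{1-\alpha}$: the contour may turn away from the real axis at the endpoints only through an angle $\theta_0<\tfrac{\pi}{2(\alpha-1)}$, for otherwise $\operatorname{Re}\bigl((1-z^2)^{1-\alpha}\bigr)$ changes sign and $|g|$ becomes large, and it is exactly this angular budget, traded against the depth $r\sin\theta_0$ attained by the contour near the endpoints, that after optimization produces $\xi^{1-1/\alpha}$ rather than a faster rate. A real-variable alternative would be to prove the Gevrey estimate $\|g^{(n)}\|_{L^\infty(\mathbb{R})}\le C_\alpha^{\,n+1}(n!)^{\alpha/(\alpha-1)}$ and optimize $n\sim|\xi|^{(\alpha-1)/\alpha}$ in $|\hat g(\xi)|\le(2\pi|\xi|)^{-n}\|g^{(n)}\|_{L^1}$ (valid since $\operatorname{supp}g\subseteq[-1,1]$), using Stirling's formula; but the sharp Gevrey bound, say via the Cauchy integral for $g^{(n)}(t)$ over a circle about $t$ of suitably chosen small radius, reduces to the same endpoint analysis, so I would present the contour argument as the main proof.
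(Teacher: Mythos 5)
Your argument is correct and is essentially the paper's approach: the paper simply quotes the saddle-point calculation from Section~3 of Johnson's note, locating the saddle $t_0\sim(\mathrm{const}/\iu k)^{1/\alpha}$ of the phase and reading off $g(t_0)\sim k^{1-\alpha^{-1}}$, which is exactly the tradeoff you reproduce. Your version is in fact more careful than the paper's sketch --- replacing the asymptotic steepest-descent evaluation by a fixed-angle broken-line contour with a pointwise bound and a one-variable minimization turns the heuristic into a genuine upper bound, and your branch-cut bookkeeping (the constraint $\theta_0<\tfrac{\pi}{2(\alpha-1)}$ and the estimate $\operatorname{Re}\bigl((1-z^2)^{1-\alpha}\bigr)\gtrsim r^{1-\alpha}$ on the slanted segments) is exactly the point the paper leaves implicit.
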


We begin with a key lemma showing that we only need frequencies lying on a lattice to represent functions $f$ with decaying Fourier transform on a bounded set.
\begin{lemma}\label{fourier-representation-lemma-general}
 Let $\Omega=[0,1]^d$ and $\mu:\mathbb{R}^d\rightarrow \mathbb{R}_+$ be a continuous weight function. Suppose that $\mu$ satisfies the following conditions
 \begin{itemize}
  \item $\mu(\xi + \omega) \leq \mu(\xi)\mu(\omega)$
  \item There exists a $0 < \beta < 1$ and a $c > 0$ such that $\mu(\xi) \lesssim e^{c|\xi|^{\beta}}$.
 \end{itemize}

 Suppose that $f$ satisfies
 \begin{equation}
  \int_{\mathbb{R}^d} \mu(\xi)|\hat{f}(\xi)|d\xi = C_f < \infty.
 \end{equation}

 Then for any $L > 1$, there exists an $a\in L^{-1}[0,1]^d$ (which may depend on $f$) and coefficients $c_\xi$, such that for $x\in \Omega$
 \begin{equation}
  f(x) = \sum_{\xi\in L^{-1}\mathbb{Z}^d}c_\xi e^{2\pi \iu  (a+\xi)\cdot x}
 \end{equation}
 and
 \begin{equation}
  \sum_{\xi\in L^{-1}\mathbb{Z}^d} \mu(a+\xi)|c_\xi| \lesssim C_f.
 \end{equation}

\end{lemma}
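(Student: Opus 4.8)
The plan is to pass from the continuous Fourier representation of $f$ to a Fourier series on the lattice $L^{-1}\mathbb{Z}^d$ by multiplying $f$ by a smooth window and periodizing; the weighted $\ell^1$ bound on the resulting coefficients then comes from the submultiplicativity of $\mu$. Fix $L>1$ and $a\in L^{-1}[0,1]^d$ (one may take $a=0$; the argument is identical for any $a$). The first and main step is to build a window $\phi\in C_c^\infty(\mathbb{R}^d)$ with $\phi\equiv 1$ on $\Omega=[0,1]^d$, with $\operatorname{supp}\phi$ contained in a cube of side length $<L$, and — crucially — with Fourier transform decaying faster than $\mu$ grows: $|\hat\phi(\xi)|\lesssim e^{-c'|\xi|^{\gamma}}$ for some $\gamma$ with $\beta<\gamma<1$. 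This is exactly where the hypothesis $\beta<1$ is used, and where the previous lemma enters: taking $\rho(x)=c\prod_{i=1}^d g(x_i/r)$ with $g$ as in \eqref{alpha-g}, $r$ small, and $c$ chosen so $\int\rho=1$, and setting $\phi=\mathbf{1}_{[-r,1+r]^d}*\rho$, the decay estimate \eqref{eq_181} applied coordinatewise together with $\sum_i|\xi_i|^{1-\alpha^{-1}}\gtrsim|\xi|^{1-\alpha^{-1}}$ gives $|\hat\rho(\xi)|\lesssim e^{-c'|\xi|^{1-\alpha^{-1}}}$, hence $|\hat\phi(\xi)|\le (1+2r)^d|\hat\rho(\xi)|\lesssim e^{-c'|\xi|^{1-\alpha^{-1}}}$; choosing $\alpha$ large makes $\gamma:=1-\alpha^{-1}>\beta$, while $\phi\equiv 1$ on $\Omega$ and the support condition hold as soon as $r<(L-1)/2$.

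Next, set $g_f(x):=\phi(x)e^{-2\pi\iu a\cdot x}f(x)$, which agrees with $e^{-2\pi\iu a\cdot x}f(x)$ on $\Omega$ and is supported in a cube of side $<L$. Hence its $L$-periodization $G(x):=\sum_{m\in\mathbb{Z}^d}g_f(x+Lm)$ is $L$-periodic and equals $g_f$ — therefore also $e^{-2\pi\iu a\cdot x}f(x)$ — on $\Omega$. Expanding $G$ in the Fourier basis $\{e^{2\pi\iu\xi\cdot x}\}_{\xi\in L^{-1}\mathbb{Z}^d}$ of the torus $\mathbb{R}^d/L\mathbb{Z}^d$ and unfolding the periodization shows that the Fourier coefficients are samples of the continuous transform, $c_\xi=L^{-d}\widehat{g_f}(\xi)$. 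Multiplying back by $e^{2\pi\iu a\cdot x}$ gives the claimed representation $f(x)=\sum_{\xi\in L^{-1}\mathbb{Z}^d}c_\xi e^{2\pi\iu(a+\xi)\cdot x}$ on $\Omega$, where, since $\widehat{g_f}=\hat\phi*\hat f(\cdot+a)$ with our Fourier convention, $c_\xi=L^{-d}\int_{\mathbb{R}^d}\hat\phi(\xi-\omega)\hat f(\omega+a)\,d\omega$.

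It remains to bound $\sum_\xi\mu(a+\xi)|c_\xi|$. Writing $a+\xi=(a+\omega)+(\xi-\omega)$ and using submultiplicativity $\mu(a+\xi)\le\mu(a+\omega)\mu(\xi-\omega)$ under the integral sign factors the estimate as
\[
\sum_{\xi\in L^{-1}\mathbb{Z}^d}\mu(a+\xi)|c_\xi|\le L^{-d}\int_{\mathbb{R}^d}\mu(a+\omega)|\hat f(a+\omega)|\Big(\sum_{\xi\in L^{-1}\mathbb{Z}^d}\mu(\xi-\omega)|\hat\phi(\xi-\omega)|\Big)d\omega .
\]
Since $\mu(\zeta)|\hat\phi(\zeta)|\lesssim e^{c|\zeta|^{\beta}-c'|\zeta|^{\gamma}}$ with $\gamma>\beta$, the inner sum is bounded by a constant $K=K(\mu,\phi,L,d)$ uniformly over all translates of $L^{-1}\mathbb{Z}^d$ (a standard comparison of a lattice sum of a rapidly decaying function with its integral). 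Substituting $\eta=a+\omega$ then gives $\sum_\xi\mu(a+\xi)|c_\xi|\le KL^{-d}\int\mu(\eta)|\hat f(\eta)|\,d\eta=KL^{-d}C_f\lesssim C_f$, as required. Absolute convergence of the representing series — needed for the identity to hold pointwise on $\Omega$ — follows from $\mu\ge 1$, which holds in the cases of interest (indeed $\mu(0)\ge 1$ by submultiplicativity); otherwise the identity may be read in $L^2(\Omega)$.

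The main obstacle is the construction of the window in the first step: its Fourier transform must decay strictly faster than $\mu$ grows while $\phi$ stays compactly supported and identically $1$ on $\Omega$, which is precisely what the assumption $\beta<1$ and the bump function of the preceding lemma make possible. Once the window is in hand, the only delicate point is the bookkeeping that converts the convolution identity $\widehat{\phi\,e^{-2\pi\iu a\cdot}f}=\hat\phi*\hat f(\cdot+a)$ into a product bound via $\mu(a+\xi)\le\mu(a+\omega)\mu(\xi-\omega)$; everything else is routine.
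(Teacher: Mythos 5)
Your proof is correct, and while the overall scaffolding (a compactly supported window built from the bump function of the preceding lemma with Fourier decay $e^{-c'|\xi|^{\gamma}}$ for some $\gamma>\beta$, followed by periodization/Poisson summation to pass to the lattice $L^{-1}\mathbb{Z}^d$) matches the paper's, the way you obtain the weighted coefficient bound is genuinely different. The paper first shows $\int_{\mathbb{R}^d}\mu(\xi)|\widehat{\phi f}(\xi)|\,d\xi\lesssim C_f$ via submultiplicativity and Fubini, then rewrites this integral as $\int_{[0,L^{-1}]^d}\bigl(\sum_{\xi\in L^{-1}\mathbb{Z}^d}\mu(a+\xi)|\widehat{\phi f}(a+\xi)|\bigr)\,da$ and uses an averaging (pigeonhole) argument to conclude that the lattice sum is controlled for \emph{some} $a$ --- which is precisely why the statement only asserts existence of an $f$-dependent shift. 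You instead fix $a$ at the outset and bound the lattice sum directly, interchanging the sum over $\xi$ with the convolution integral and using that $\sup_{\omega}\sum_{\xi\in L^{-1}\mathbb{Z}^d}\mu(\xi-\omega)|\hat\phi(\xi-\omega)|<\infty$ because $\mu(\zeta)|\hat\phi(\zeta)|\lesssim e^{c|\zeta|^{\beta}-c'|\zeta|^{\gamma}}$ decays rapidly. This is a discrete Young-type estimate in place of the paper's averaging step, and it buys a slightly stronger conclusion: the representation and the bound $\sum_{\xi}\mu(a+\xi)|c_\xi|\lesssim C_f$ hold for \emph{every} $a\in L^{-1}[0,1]^d$ (in particular $a=0$), with the constant $\sup_{\omega}\sum_{\xi}\mu(\xi-\omega)|\hat\phi(\xi-\omega)|$ playing the role of the paper's $\int\mu|\hat\phi_\Omega|$. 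Your closing remark on absolute convergence of the series is at the same level of rigor as the paper's appeal to Poisson summation, so there is no gap there.
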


Note that the suppressed constant in the above lemma only depends upon $d,\mu$ and $L$, but not on $f$ or $a$. Furthmore, we note that from the proof below it follows that the suppressed constant depends exponentially on the dimension $d$.

\begin{proof}
 Since by assumption $L > 1$, there exists an $\epsilon$ such that $\Omega\subset [0,L-2\epsilon]^d$. We begin by constructing a cutoff function $\phi_\Omega$, which is identically $1$ on $\Omega$ and $0$ outside of $[-\epsilon, L-\epsilon]^d$. It will be important that the Fourier transform $\hat{\phi}_\Omega$ has sufficiently fast decay, so that
 \begin{equation}
  \int_{\mathbb{R}^d}\mu(\xi)|\hat{\phi}_\Omega(\xi)| < \infty.
 \end{equation}
 
 To construct this function, we follow closely the calculation made in \cite{johnson2015saddle}. Choose $\alpha > 1$ such that $\beta < 1-\alpha^{-1} < 1$ and consider the smooth one-dimensional bump function $g$ by \eqref{alpha-g}.
 Let $g_d$ denote the $n$-dimensional function
 \begin{equation}
  g_d(x) = \frac{1}{C}\prod_{i=1}^d g(x_i),
 \end{equation}
 where the normalization constant $C$ is chosen so that $\int_{\mathbb{R}^d} g_d(x) = 1$. Then by \eqref{eq_181} we see that
 \begin{equation}
  |\hat{g}_d(\xi)|\lesssim e^{-c_\alpha\sum_{i=1}^d|\xi_i|^{1-\alpha^{-1}}} \lesssim e^{-c_{\alpha,d}\alpha|\xi|^{1-\alpha^{-1}}},
 \end{equation}
 for a new constant $c_{\alpha,d}$.
 
 Finally, let $\Omega^\prime = [-\frac{\epsilon}{2}, L - \frac{3\epsilon}{2}]^d$ and define
 \begin{equation}
  \phi_\Omega = \left(4^d\epsilon^{-d}g_d\left(4\epsilon^{-1}x\right)\right)*\chi_{\Omega^\prime}(x).
 \end{equation}
 The compact support and normalization of $g_d$  implies that $\phi_\Omega|_\Omega = 1$ and $\phi_\Omega = 0$ outside of $[-\epsilon, L-\epsilon]^d$. Furthermore, we calculate
 \begin{equation}
  |\hat{\phi}_\Omega(\xi)| = \left|\hat{g}_d\left(\frac{\epsilon}{4}\xi\right)\widehat{\chi}_{\Omega^\prime}\right| \lesssim e^{-c_{\alpha,\Omega}|\xi|^{1-\alpha^{-1}}}
 \end{equation}
 for a constant $c_{\alpha,\Omega}$, since $\widehat{\chi}_{\Omega^\prime}$ is bounded. The growth condition on $\mu$, combined with $\beta < 1-\alpha^{-1} < 1$ means that
 \begin{equation}\label{eq_205}
  \int_{\mathbb{R}^d}\mu(\xi)|\hat{\phi}_\Omega(\xi)| < \infty.
 \end{equation}
 
 Now consider the function $h_f = \phi_\Omega f$. Evidently $h_f = f$ on $\Omega$ and $h_f$ is supported on $[-\epsilon,L-\epsilon]^d$. Notice further that $\hat{h}_f = \hat{\phi}_\Omega * \hat{f}$ and we calculate
 \begin{equation}
  \begin{split}
  \int_{\mathbb{R}^d} \mu(\xi)|\hat{h}_f(\xi)|d\xi &\leq \int_{\mathbb{R}^d} \int_{\mathbb{R}^d}\mu(\xi) |\hat{\phi}_\Omega(\xi - \omega)||\hat{f}(\omega)| d\omega d\xi \\
  &=\int_{\mathbb{R}^d} \left(\int_{\mathbb{R}^d}\mu(\xi + \omega) |\hat{\phi}_\Omega(\xi)|d\xi\right)|\hat{f}(\omega)| d\omega. 
  \end{split}
 \end{equation}
 Now $\mu(\xi + \omega) \leq \mu(\xi)\mu(\omega)$, so that we get
 \begin{equation}\label{eq_369}
  \int_{\mathbb{R}^d} \mu(\xi)|\hat{h}_f(\xi)|d\xi \leq \left(\int_{\mathbb{R}^d}\mu(\xi) |\hat{\phi}_\Omega(\xi)|d\xi\right)\left(\int_{\mathbb{R}^d}\mu(\omega) |\hat{f}(\omega)|d\omega\right) \lesssim C_f,
 \end{equation}
 where the implied constant depends the value of the integral in \eqref{eq_205}.
 
 We now rewrite the integral in \eqref{eq_369} as
 \begin{equation}
  \int_{\mathbb{R}^d} \mu(\xi)|\hat{h}_f(\xi)|d\xi = \int_{[0,L^{-1}]^d} \left(\sum_{\xi\in L^{-1}\mathbb{Z}^d} \mu(a+\xi)|\hat{h}_f(a+\xi)|\right)da \lesssim C_f.
 \end{equation}
 Certainly this means that there must exist an $a\in [0,L^{-1}]^d$ (depending on $f$) such that
 \begin{equation}
  \left(\sum_{\xi\in L^{-1}\mathbb{Z}^d} \mu(a+\xi)|\hat{h}_f(a+\xi)|\right) \lesssim C_f,
 \end{equation}
 where the implied constant only depends upon $L$ and $d$.
 
 We proceed to apply the Poisson summation formula and the fact that $h_f$ is supported in $[-\epsilon,L-\epsilon]^d$ to conclude that for a.e. $x\in \Omega \subset [-\epsilon,L-\epsilon]^d$ we have
 \begin{equation}
  f(x) = h_f(x) = \sum_{\nu \in L\mathbb{Z}^d} h_f(x+\nu)e^{2\pi \iu  a\cdot \nu} = \sum_{\xi\in L^{-1}\mathbb{Z}^d}\hat{h}_f(a+\xi) e^{2\pi \iu  (a+\xi)\cdot x}. 
 \end{equation}
 Here we have applied the Poisson summation formula to the function $g(\nu) = h_f(x+\nu)e^{2\pi \iu  a\cdot \nu}$, whose Fourier transform is easily seen to be $\hat{g}(\xi) = \hat{h}_f(a+\xi) e^{2\pi \iu  (a+\xi)\cdot x}$.
 
 Setting $c_\xi = \hat{h}_f(a+\xi)$ we obtain the desired result.
\end{proof}

We now apply Lemma \ref{fourier-representation-lemma-general} with $\mu(\xi) = (1 + |\xi|)^s$ to obtain the following corollary concerning the spectral Barron space $\mathcal{B}^s(\Omega)$.
\begin{corollary}\label{fourier-representation-lemma}
 Let $\Omega = [0,1]^d$ and $s \geq 0$. Let $f\in \mathcal{B}^s(\Omega)$. Then for any $L > 1$, there exists an $a\in L^{-1}[0,1]^d$ (potentially depending upon $f$) and coefficients $c_\xi$ such that
 for $x\in \Omega$
 \begin{equation}
  f(x) = \sum_{\xi\in L^{-1}\mathbb{Z}^d}c_\xi e^{2\pi \iu  (a+\xi)\cdot x}
 \end{equation}
 and
 \begin{equation}
  \sum_{\xi\in L^{-1}\mathbb{Z}^d} (1+|a+\xi|)^s|c_\xi| \lesssim \|f\|_{\mathcal{B}^s(\Omega)}.
 \end{equation}

\end{corollary}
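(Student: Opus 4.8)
The plan is to obtain this corollary as a direct specialization of Lemma~\ref{fourier-representation-lemma-general} to the weight function $\mu(\xi) = (1+|\xi|)^s$. This $\mu$ is manifestly continuous and positive, so the only work is (i) to check that it satisfies the two structural hypotheses of the lemma (submultiplicativity and subexponential growth), and (ii) to reconcile the lemma's hypothesis $\int_{\mathbb{R}^d}\mu(\xi)|\hat{f}(\xi)|\,d\xi<\infty$ with membership in $\mathcal{B}^s(\Omega)$, which is phrased via extensions.

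First I would verify submultiplicativity: for $\xi,\omega\in\mathbb{R}^d$ and $s\geq 0$, the triangle inequality together with monotonicity of $t\mapsto t^s$ and the elementary bound $1+a+b\leq(1+a)(1+b)$ for $a,b\geq 0$ give
\begin{equation}
 (1+|\xi+\omega|)^s \leq (1+|\xi|+|\omega|)^s \leq \big((1+|\xi|)(1+|\omega|)\big)^s = (1+|\xi|)^s(1+|\omega|)^s,
\end{equation}
which is exactly $\mu(\xi+\omega)\leq\mu(\xi)\mu(\omega)$. For the growth condition, I would fix any $\beta\in(0,1)$, say $\beta=\tfrac12$; since a polynomial in $|\xi|$ is dominated by $e^{|\xi|^{\beta}}$ for large $|\xi|$ and $(1+|\xi|)^s$ is bounded on bounded sets, there is a constant (depending only on $s$ and $d$) with $(1+|\xi|)^s\lesssim e^{|\xi|^{\beta}}$ on all of $\mathbb{R}^d$. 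Thus both hypotheses hold.

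Next I would deal with the infimum over extensions. If $\|f\|_{\mathcal{B}^s(\Omega)}=0$, then any sequence of extensions $f_{e,n}$ with $\int(1+|\xi|)^s|\hat{f}_{e,n}|\to 0$ forces $\|f_{e,n}\|_{L^\infty}\to 0$, so $f\equiv 0$ on $\Omega$ and the representation is trivial. Otherwise, choosing $\epsilon=\|f\|_{\mathcal{B}^s(\Omega)}$, the definition of the spectral Barron norm furnishes an extension $f_e\in L^1(\mathbb{R}^d)$ with $f_e|_\Omega=f$ and $\int_{\mathbb{R}^d}(1+|\xi|)^s|\hat{f}_e(\xi)|\,d\xi\leq 2\|f\|_{\mathcal{B}^s(\Omega)}$. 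Applying Lemma~\ref{fourier-representation-lemma-general} to $f_e$ with $C_{f_e}\leq 2\|f\|_{\mathcal{B}^s(\Omega)}$ yields, for any $L>1$, a point $a\in L^{-1}[0,1]^d$ and coefficients $c_\xi$ with $f_e(x)=\sum_{\xi\in L^{-1}\mathbb{Z}^d}c_\xi e^{2\pi\iu(a+\xi)\cdot x}$ on $\Omega$ and $\sum_{\xi\in L^{-1}\mathbb{Z}^d}(1+|a+\xi|)^s|c_\xi|\lesssim C_{f_e}\lesssim\|f\|_{\mathcal{B}^s(\Omega)}$, with implied constant depending only on $d$, $L$, and $s$. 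Since $f_e=f$ on $\Omega$, this is the claimed statement.

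There is no genuine obstacle here beyond bookkeeping. The one point that requires a moment's care is that the representation produced by the lemma depends on the chosen extension, so one cannot let $\epsilon\to 0$ and pass to a limit; instead one fixes a single near-optimal extension (e.g.\ with $\epsilon=\|f\|_{\mathcal{B}^s(\Omega)}$), which is harmless because the desired conclusion is an inequality with an absolute constant.
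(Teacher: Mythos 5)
Your proposal is correct and follows exactly the paper's route: the paper also deduces the corollary by applying Lemma~\ref{fourier-representation-lemma-general} with $\mu(\xi)=(1+|\xi|)^s$, citing the same elementary bound $(1+|\xi+\omega|)\leq(1+|\xi|)(1+|\omega|)$. Your additional care with the near-optimal extension and the degenerate case $\|f\|_{\mathcal{B}^s(\Omega)}=0$ is sound bookkeeping that the paper leaves implicit.
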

\begin{proof}
 This follows immediately from Lemma \ref{fourier-representation-lemma-general} given the characterization of $\mathcal{B}^s(\Omega)$ and the elementary fact that $(1 + |\xi + \omega|)\leq (1 + |\xi| + |\omega|) \leq (1 + |\xi|)(1 + |\omega|)$.
\end{proof}

Corollary \ref{fourier-representation-lemma} can be used to improve upon the $O(n^{-\frac{1}{2}})$ approximation rate of cosine networks obtained in \cite{jones1992simple} when $f\in \mathcal{B}^s(\Omega)$ for $s > 0$.

\begin{theorem}\label{approximation-rate-theorem}
 Let $\Omega = [0,1]^d$, $0\leq m\leq s$, and $f\in \mathcal{B}^s(\Omega)$. Then there is an $M\lesssim \|f\|_{\mathcal{B}^s(\Omega)}$ such that
 \begin{equation}
  \inf_{f_n\in \Sigma_{n,M}} \|f-f_n\|_{H^m(\Omega)} \lesssim \|f\|_{\mathcal{B}^s(\Omega)}n^{-\frac{1}{2} - \frac{s-m}{d}}.
 \end{equation}

\end{theorem}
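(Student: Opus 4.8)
\noindent\emph{Proof plan.} The plan is to feed the lattice Fourier representation of Corollary~\ref{fourier-representation-lemma} into a frequency split followed by a weighted form of Maurey's empirical-sampling lemma. Fixing $L=2$ in Corollary~\ref{fourier-representation-lemma}, write, on $\Omega$,
\begin{equation}
 f(x)=\sum_{\xi\in 2^{-1}\mathbb{Z}^d}c_\xi e^{2\pi\iu(a+\xi)\cdot x},\qquad \sum_{\xi\in 2^{-1}\mathbb{Z}^d}(1+|a+\xi|)^s|c_\xi|\lesssim\|f\|_{\mathcal{B}^s(\Omega)}.
\end{equation}
A one-line computation on $[0,1]^d$ gives $\|e^{2\pi\iu\omega\cdot x}\|_{H^m(\Omega)}\lesssim(1+|\omega|)^m$, so the rescaled exponentials $g_\omega:=(1+|\omega|)^{-m}e^{2\pi\iu\omega\cdot x}$ are uniformly bounded in $H^m(\Omega)$; in particular the series above converges absolutely in $H^m(\Omega)$ (here $m\le s$ is used). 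The second structural fact is that $2^{-1}\mathbb{Z}^d$ is sparse: $\#\{\xi:|a+\xi|\le R\}\lesssim R^d$, with a constant depending only on $d$. We may assume $n$ exceeds a fixed constant, since otherwise the claim follows by taking $f_n=0$ together with $\|f\|_{H^m(\Omega)}\lesssim\|f\|_{\mathcal{B}^s(\Omega)}$ (which holds because $(1+|\xi|)^s\hat{f}_e$ is integrable when $s\ge m$).

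Next I would choose $R\sim n^{1/d}$ small enough that at most $n/2$ of the $\xi$ satisfy $|a+\xi|\le R$, and split $f=f^{\mathrm{low}}+f^{\mathrm{high}}$ according to $|a+\xi|\le R$ versus $|a+\xi|>R$. The low part is a sum of at most $n/2$ exponentials with $\sum|c_\xi|\le\sum(1+|a+\xi|)^s|c_\xi|\lesssim\|f\|_{\mathcal{B}^s(\Omega)}$, hence lies \emph{exactly} in $\Sigma_{\lfloor n/2\rfloor,M_1}$ with $M_1\lesssim\|f\|_{\mathcal{B}^s(\Omega)}$. For the high part, write $f^{\mathrm{high}}=\sum_{|a+\xi|>R}\big(c_\xi(1+|a+\xi|)^m\big)\,g_{a+\xi}$, so it lies in $\Lambda$ times the symmetric convex hull of $\{\pm g_\omega\}$, where, using $m\le s$ and $R\gtrsim n^{1/d}$,
\begin{equation}
 \Lambda:=\sum_{|a+\xi|>R}(1+|a+\xi|)^m|c_\xi|\le(1+R)^{-(s-m)}\sum_\xi(1+|a+\xi|)^s|c_\xi|\lesssim n^{-(s-m)/d}\|f\|_{\mathcal{B}^s(\Omega)}.
\end{equation}
Applying Maurey's lemma (see \cite{pisier1981remarques,jones1992simple,barron1993universal}) in the Hilbert space $H^m(\Omega)$ with $\lceil n/2\rceil$ samples then produces $f^{\mathrm{high}}_n$ — a combination of $\lceil n/2\rceil$ of the $g_{a+\xi}$, equivalently of $\lceil n/2\rceil$ exponentials whose coefficients have $\ell^1$-norm $\le\Lambda$ (because $(1+|a+\xi|)^{-m}\le1$) — with $\|f^{\mathrm{high}}-f^{\mathrm{high}}_n\|_{H^m(\Omega)}\lesssim\Lambda/\sqrt{n}\lesssim\|f\|_{\mathcal{B}^s(\Omega)}\,n^{-1/2-(s-m)/d}$. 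Taking $f_n=f^{\mathrm{low}}+f^{\mathrm{high}}_n\in\Sigma_{n,M}$ with $M=M_1+\Lambda\lesssim\|f\|_{\mathcal{B}^s(\Omega)}$ completes the argument, since $f-f_n=f^{\mathrm{high}}-f^{\mathrm{high}}_n$.

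The one genuinely nontrivial point is arriving at this particular split: the improvement over the classical $n^{-1/2}$ rate is bought by spending half the neurons to reproduce the (few, because the spectrum sits on a lattice) low frequencies exactly, which shrinks the $\ell^1$ mass handed to Maurey by a factor $R^{-(s-m)}\sim n^{-(s-m)/d}$; dividing by $\sqrt{n}$ gives the stated exponent, and this is only possible because Corollary~\ref{fourier-representation-lemma} concentrates the Fourier content on a lattice. Everything else — the $H^m$-norm estimate for exponentials, the lattice point count, the exact form of Maurey's lemma over a complex Hilbert space, and the bookkeeping of $\ell^1$ bounds and neuron counts — is routine; one can moreover improve the implied constant slightly by tuning the low/high split ratio away from $50/50$.
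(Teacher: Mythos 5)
Your argument is correct, but it is not the route the paper takes, so a comparison is worthwhile. Both proofs start from the same place (the lattice Fourier representation of Corollary~\ref{fourier-representation-lemma}), but from there the paper exploits the lattice through \emph{orthogonality}: the exponentials $e^{2\pi\iu(a+\xi)\cdot x}$, $\xi\in L^{-1}\mathbb{Z}^d$, are orthogonal on the enlarged cube $\Omega'=[0,L]^d$, so the paper simply keeps the $n$ terms maximizing $(1+|\xi|)^{2m-s}|c_\xi|$, bounds the squared tail by
\begin{equation}
 \Bigl(\sup_{\xi\in S_n^c}|c_\xi|(1+|\xi|)^{2m-s}\Bigr)\Bigl(\sum_{\xi\in S_n^c}|c_\xi|(1+|\xi|)^{s}\Bigr),
\end{equation}
and controls the sup by the lattice-point count $\sum_{\nu\in S_n}(1+|\nu|)^{2(s-m)}\gtrsim n^{1+2(s-m)/d}$. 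This is deterministic, needs no sampling lemma, and yields a genuine best-$n$-term bound in an orthogonal system. You instead exploit the lattice only through its \emph{sparsity} ($O(R^d)$ points below radius $R$): you reproduce the low frequencies exactly with half the neurons and hand the high-frequency tail, whose $H^m$-renormalized $\ell^1$ mass has decayed by $R^{-(s-m)}\sim n^{-(s-m)/d}$, to Maurey's sampling lemma. Your route never uses orthogonality, so it works directly on $\Omega$ and is structurally the same stratification idea the paper deploys later for ReLU$^k$ networks (where no orthogonality is available); the price is invoking Maurey over a complex Hilbert space and a small amount of bookkeeping with the phases and the split of the neuron budget, all of which you handle correctly. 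The exponents and the bound $M\lesssim\|f\|_{\mathcal{B}^s(\Omega)}$ come out the same either way.
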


Note that the implied constant in the above theorem depends only upon $s,m$, and $d$, but not on $f$. Comparing this with the results in \cite{jones1992simple}, we obtain a dimension dependent improvement similar to what can be obtained using stratified sampling \cite{klusowski2018approximation} for rectified linear networks. However, the improvement in Theorem \ref{approximation-rate-theorem}, which is obtained via an entirely different argument, is greater and holds for cosine networks. We will consider rectified linear networks in the next section. Also, we note that for the Sobolev spaces $H^{\frac{d}{2} + s}(\Omega)$, this result already appears in \cite{petrushev1998approximation} . However, our results apply to the spectral Barron space $\mathcal{B}^s(\Omega)$, which is not quite comparable, but we do have $H^{\frac{d}{2} + s + \epsilon}(\Omega)\subset \mathcal{B}^s(\Omega)$ (see \cite{CiCP-28-1707} Lemma 2.5, for instance). Finally, as shown in Theorem \ref{fourier-lower-bound}, the rate in Theorem \ref{approximation-rate-theorem} is actually sharp.

\begin{proof}
 Choose $L > 1$. Note that all of the implied constants in what follows depend only upon $s,m,d$ and $L$, but not upon $f$. 
 
 By Corollary \ref{fourier-representation-lemma}, there exists an $a\in L^{-1}[0,1]^d$ and coefficients $c_\xi$ such that
  \begin{equation}\label{eq-580}
  f(x) = \sum_{\xi\in L^{-1}\mathbb{Z}^d}c_\xi e^{2\pi \iu  (a+\xi)\cdot x},
 \end{equation}
 and (here the first estimate follows since $|a|\leq L^{-d}\sqrt{d}$)
 \begin{equation}\label{eq_613}
  \sum_{\xi\in L^{-1}\mathbb{Z}^d} (1+|\xi|)^s|c_\xi| \eqsim \sum_{\xi\in L^{-1}\mathbb{Z}^d} (1+|a+\xi|)^s|c_\xi|  \lesssim \|f\|_{\mathcal{B}^s(\Omega)}.
 \end{equation}
 Consider the slightly enlarged set $\Omega^\prime = [0,L]^d \supset \Omega$. On this larger set, we have for $\xi\neq\nu\in L^{-1}\mathbb{Z}^d$
 \begin{equation}\label{orthogonality-condition}
  \langle e^{2\pi \iu  (a+\xi)\cdot x}, e^{2\pi \iu  (a+\nu)\cdot x}\rangle_{H^k(\Omega^\prime)} = 0,
 \end{equation}
 so that the frequencies in the expansion \eqref{eq-580} form an orthogonal basis in $H^k(\Omega^\prime)$. Moreover, their lengths satisfy
 \begin{equation}\label{length-estimate}
  \|e^{2\pi \iu  (a+\xi)\cdot x}\|_{H^m(\Omega^\prime)} \lesssim (1+|a+\xi|)^m \eqsim (1+|\xi|)^m.
 \end{equation}
 Order the frequencies $\xi\in L^{-1}\mathbb{Z}^d$ such that
 \begin{equation}
   (1+|\xi_1|)^{2m-s}|c_{\xi_1}| \geq (1+|\xi_2|)^{2m-s}|c_{\xi_2}| \geq (1+|\xi_3|)^{2m-s}|c_{\xi_3}| \geq \cdots.
 \end{equation}

 For $n \geq 1$, let $S_n = \{\xi_1,\xi_2,...,\xi_n\}$ and set 
 \begin{equation}
  f_n = \sum_{\xi\in S_n}c_\xi e^{2\pi \iu  (a+\xi)\cdot x} \in \Sigma_{n,M}
 \end{equation}
 for $M\lesssim \|f\|_{\mathcal{B}^s}$ by \eqref{eq_613}.
 
 We now estimate, using \eqref{orthogonality-condition} and \eqref{length-estimate},
 \begin{equation}\label{eq_634}
 \begin{split}
  \|f - f_n\|^2_{H^m(\Omega^\prime)} &= \left\|\sum_{\xi\in S_n^c} c_\xi e^{2\pi \iu  (a+\xi)\cdot x} \right\|^2_{H^m(\Omega^\prime)} \\
  &= \sum_{\xi\in S_n^c} |c_\xi|^2\|e^{2\pi \iu  (a+\xi)\cdot x}\|_{H^m(\Omega^\prime)}^2 \\
  &\lesssim \sum_{\xi\in S_n^c} |c_\xi|^2(1+|\xi|)^{2m}.
  \end{split}
 \end{equation}
 Using Hoelder's inequality, we get
 \begin{equation}\label{eq_638}
  \sum_{\xi\in S_n^c} |c_\xi|^2(1+|\xi|)^{2m} \leq \left(\sup_{\xi\in S_n^c} |c_\xi|(1+|\xi|)^{2m-s} \right)\left(\sum_{\xi\in S_n^c}|c_\xi|(1+|\xi|)^{s}\right)
 \end{equation}
 By \eqref{eq_613}, the second term above is $\lesssim \|f\|_{\mathcal{B}^s(\Omega)}$. For the first term, we note that \eqref{eq_613} implies that
 \begin{equation}\label{eq_642}
  \sum_{\nu\in S_n} |c_\nu|(1+|\nu|)^{2m-s}(1+|\nu|)^{2(s-m)}\lesssim \|f\|_{\mathcal{B}^s(\Omega)}.
 \end{equation}
 Now, by the definition of $S_n$, we have for every $\nu\in S_n$
 \begin{equation}
  \left(\sup_{\xi\in S_n^c} |c_\xi|(1+|\xi|)^{2m-s}\right) \leq |c_\nu|(1+|\nu|)^{2m-s},
 \end{equation}
 so that
 \begin{equation}
 \begin{split}
  &\left(\sup_{\xi\in S_n^c} |c_\xi|(1+|\xi|)^{2m-s}\right)\sum_{\nu\in S_n}(1+|\nu|)^{2(s-m)}\\
  &\leq \sum_{\nu\in S_n} |c_\nu|(1+|\nu|)^{2m-s}(1+|\nu|)^{2(s-m)}.
  \end{split}
 \end{equation}
 By \eqref{eq_642}, we thus have
 \begin{equation}
  \left(\sup_{\xi\in S_n^c} |c_\xi|(1+|\xi|)^{2m-s}\right) \lesssim \|f\|_{\mathcal{B}^s(\Omega)}\left(\sum_{\nu\in S_n}(1+|\nu|)^{2(s-m)}\right)^{-1}.
 \end{equation}
 The sum $\sum_{\nu\in S_n}(1+|\nu|)^{2(s-m)}$ is over $n$ elements of the lattice $L^{-1}\mathbb{Z}^d$, from which it easily follows by comparison with an integral (see, for instance, \cite{erdos1989lattice}) that
 \begin{equation}
  \sum_{\nu\in S_n}(1+|\nu|)^{2(s-m)} \gtrsim n^{1+\frac{2(s-m)}{d}},
 \end{equation}
 and we obtain
 \begin{equation}
  \left(\sup_{\xi\in S_n^c} |c_\xi|(1+|\xi|)^{2m-s}\right) \lesssim \|f\|_{\mathcal{B}^s(\Omega)}n^{-1-\frac{2(s-m)}{d}}.
 \end{equation}
 Combining this with \eqref{eq_634} and \eqref{eq_638}, we get
 \begin{equation}
  \|f - f_n\|^2_{H^m(\Omega^\prime)} \lesssim \|f\|^2_{\mathcal{B}^s(\Omega)}n^{-1-\frac{2(s-m)}{d}}.
 \end{equation}
 Finally, since $\Omega^\prime \supset \Omega$, we get
 \begin{equation}
  \|f - f_n\|_{H^m(\Omega)} \leq \|f - f_n\|_{H^m(\Omega^\prime)} \lesssim \|f\|_{\mathcal{B}^s(\Omega)}n^{-\frac{1}{2}-\frac{(s-m)}{d}},
 \end{equation}
 which completes the proof.
\end{proof}

In Theorem \ref{approximation-rate-theorem}, we obtained arbitrarily high polynomial rates of convergence for sufficiently smooth functions. Next we generalize this by showing that if the Fourier transform decays at a superpolynomial rate, then we can obtain spectral (i.e. superpolynomial) convergence as well. We begin by introducing an exponential version of the spectral Barron spaces.
\begin{definition}
Let $\Omega\subset \mathbb{R}^d$ be a bounded domain and let $0 < \beta < 1$ and $c > 0$. The exponential spectral Barron space with parameters $\beta$ and $c$ is defined by
\begin{equation}
 \mathcal{B}_{\beta,c}(\Omega):=\left\{f:\Omega\rightarrow\mathbb{R}:\|f\|_{\mathcal{B}_{\beta,c}(\Omega)}:=\inf_{f_e|\Omega = f}\int_{\mathbb{R}^d}e^{c|\xi|^\beta}|\hat{f}_e(\xi)|d\xi < \infty\right\},
\end{equation}
where the infimum is taken over all extension $f_e\in L^1(\mathbb{R}^d)$.
\end{definition}
 The space $\mathcal{B}_{\beta,c}(\Omega)$ is quite restrictive, however there it still contains a relatively large class of functions. For example, it contains satisfied by any linear combination of Gaussians or any band-limited function, i.e. any function whose Fourier transform is compactly supported.

For elements of $\mathcal{B}_{\beta,c}(\Omega)$, we can prove a superpolynomial convergence rate.
\begin{theorem}\label{spectral-convergence-theorem}
 Let $\Omega = [0,1]^d$, $0 < \beta < 1$, and $c > 0$.
 Then for any $m \geq 0$, there exists a $c^\prime > 0$ such that for $f\in \mathcal{B}_{\beta,c}(\Omega)$ and $M\lesssim \|f\|_{\mathcal{B}_{\beta,c}(\Omega)}$ we have
  \begin{equation}
  \inf_{f_n\in \Sigma_{n,M}} \|f-f_n\|_{H^m(\Omega)} \lesssim \|f\|_{\mathcal{B}_{\beta,c}(\Omega)}e^{-c^\prime n^{d^{-1}\beta}}.
 \end{equation}
\end{theorem}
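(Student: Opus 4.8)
The plan is to run the argument of Theorem~\ref{approximation-rate-theorem} with the polynomial weight $(1+|\xi|)^s$ replaced by the exponential weight $\mu(\xi)=e^{c|\xi|^\beta}$. First I would check that this $\mu$ satisfies the hypotheses of Lemma~\ref{fourier-representation-lemma-general}: the growth bound $\mu(\xi)\lesssim e^{c|\xi|^\beta}$ is trivial because $0<\beta<1$, and submultiplicativity follows from $|\xi+\omega|\le|\xi|+|\omega|$ and the subadditivity of $t\mapsto t^\beta$ on $[0,\infty)$, giving $\mu(\xi+\omega)=e^{c|\xi+\omega|^\beta}\le e^{c(|\xi|^\beta+|\omega|^\beta)}=\mu(\xi)\mu(\omega)$. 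Fixing any $L>1$, the lemma then produces an $a\in L^{-1}[0,1]^d$ and coefficients $c_\xi$ with $f(x)=\sum_{\xi\in L^{-1}\mathbb{Z}^d}c_\xi e^{2\pi\iu(a+\xi)\cdot x}$ on $\Omega$ and $\sum_{\xi\in L^{-1}\mathbb{Z}^d}e^{c|a+\xi|^\beta}|c_\xi|\lesssim\|f\|_{\mathcal{B}_{\beta,c}(\Omega)}$. Since $|a|\le\sqrt d/L$ is bounded and $t\mapsto t^\beta$ is subadditive, $e^{c'|\xi|^\beta}$ and $e^{c'|a+\xi|^\beta}$ differ by a factor depending only on $c',d,\beta$; consequently, for every $0<c'\le c$ we have $\sum_\xi|c_\xi|e^{c'|\xi|^\beta}\lesssim\|f\|_{\mathcal{B}_{\beta,c}(\Omega)}$, the plain sums $\sum_\xi|c_\xi|(1+|\xi|)^m\lesssim\|f\|_{\mathcal{B}_{\beta,c}(\Omega)}$ (the polynomial being dominated by $e^{c|a+\xi|^\beta}$), and the pointwise bound $|c_\xi|\lesssim\|f\|_{\mathcal{B}_{\beta,c}(\Omega)}e^{-(c/2)|\xi|^\beta}$.

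Next I would take $f_n=\sum_{\xi\in S_n}c_\xi e^{2\pi\iu(a+\xi)\cdot x}$, where $S_n$ consists of the $n$ lattice points of $L^{-1}\mathbb{Z}^d$ of smallest Euclidean length, so that $f_n\in\Sigma_{n,M}$ with $M=\sum_{\xi\in S_n}|c_\xi|\lesssim\|f\|_{\mathcal{B}_{\beta,c}(\Omega)}$. Comparing lattice-point counts with the volume of a Euclidean ball gives $S_n^c\subseteq\{\xi:\,|\xi|\ge R_n\}$ with $R_n\gtrsim n^{1/d}$ for $n$ large (small $n$ is covered by the trivial bound $\|f-f_n\|_{H^m(\Omega)}\le\|f\|_{H^m(\Omega)}+\|f_n\|_{H^m(\Omega)}\lesssim\|f\|_{\mathcal{B}_{\beta,c}(\Omega)}$ after adjusting constants). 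On the enlarged cube $\Omega^\prime=[0,L]^d\supset\Omega$ the functions $e^{2\pi\iu(a+\xi)\cdot x}$, $\xi\in L^{-1}\mathbb{Z}^d$, are orthogonal in $H^m(\Omega^\prime)$ with $\|e^{2\pi\iu(a+\xi)\cdot x}\|_{H^m(\Omega^\prime)}\lesssim(1+|\xi|)^m$, exactly as in \eqref{orthogonality-condition}--\eqref{length-estimate}, so that
\begin{equation}
\|f-f_n\|_{H^m(\Omega^\prime)}^2=\sum_{\xi\in S_n^c}|c_\xi|^2\|e^{2\pi\iu(a+\xi)\cdot x}\|_{H^m(\Omega^\prime)}^2\lesssim\sum_{|\xi|\ge R_n}|c_\xi|^2(1+|\xi|)^{2m}.
\end{equation}

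The one real estimate is the exponential decay of this tail, which I would obtain by the splitting (with $c_1=c/4$)
\begin{equation}
|c_\xi|^2(1+|\xi|)^{2m}=\bigl[\,|c_\xi|e^{c_1|\xi|^\beta}\,\bigr]\cdot\bigl[\,|c_\xi|(1+|\xi|)^{2m}e^{-c_1|\xi|^\beta}\,\bigr].
\end{equation}
The first bracket sums to $\sum_\xi|c_\xi|e^{c_1|\xi|^\beta}\lesssim\|f\|_{\mathcal{B}_{\beta,c}(\Omega)}$ since $c_1\le c$; for the second, the pointwise bound on $|c_\xi|$ gives $|c_\xi|(1+|\xi|)^{2m}e^{-c_1|\xi|^\beta}\lesssim\|f\|_{\mathcal{B}_{\beta,c}(\Omega)}(1+|\xi|)^{2m}e^{-(3c/4)|\xi|^\beta}\lesssim\|f\|_{\mathcal{B}_{\beta,c}(\Omega)}e^{-(3c/8)|\xi|^\beta}$, because $(1+t)^{2m}e^{-(3c/8)t^\beta}$ is bounded on $[0,\infty)$. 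Pulling the supremum of the second bracket over $|\xi|\ge R_n$ out of the sum yields $\|f-f_n\|_{H^m(\Omega^\prime)}^2\lesssim\|f\|_{\mathcal{B}_{\beta,c}(\Omega)}^2e^{-(3c/8)R_n^\beta}\lesssim\|f\|_{\mathcal{B}_{\beta,c}(\Omega)}^2e^{-c_3n^{\beta/d}}$ for some $c_3>0$, using $R_n\gtrsim n^{1/d}$; restricting to $\Omega\subset\Omega^\prime$ and taking square roots then gives the theorem with $c^\prime=c_3/2$. The only delicate point is the bookkeeping of the exponents in this last split, so that one factor stays uniformly summable while the other decays like $e^{-c_3R_n^\beta}$; the lattice-count bound $R_n\gtrsim n^{1/d}$ and the orthogonality computation on $\Omega^\prime$ are routine and taken over directly from the proof of Theorem~\ref{approximation-rate-theorem}.
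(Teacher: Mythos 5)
Your proof is correct, and it reuses the paper's core machinery (Lemma \ref{fourier-representation-lemma-general} with the weight $e^{c|\xi|^\beta}$, the orthogonality of the shifted lattice frequencies on $\Omega'=[0,L]^d$, and the length bound \eqref{length-estimate}), but the way you select the $n$ retained frequencies and bound the tail is genuinely different from the paper's. The paper orders the frequencies greedily by the size of $(1+|\xi|)^{2m}e^{-c|a+\xi|^\beta}|c_\xi|$ and runs the same H\"older-type argument as in Theorem \ref{approximation-rate-theorem}: the supremum over $S_n^c$ is bounded by the reciprocal of $\sum_{\nu\in S_n}e^{2c|a+\nu|^\beta}(1+|a+\nu|)^{-2m}$, which is then lower-bounded by its largest term using the fact that some retained frequency has length $\gtrsim n^{1/d}$. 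You instead take $S_n$ to be the deterministic set of the $n$ shortest lattice vectors, so that $S_n^c\subseteq\{|\xi|\ge R_n\}$ with $R_n\gtrsim n^{1/d}$, and you control the tail directly through the pointwise bound $|c_\xi|\lesssim\|f\|_{\mathcal{B}_{\beta,c}(\Omega)}e^{-c|a+\xi|^\beta}$ (which follows from a single term of the weighted $\ell^1$ bound) together with a clean split of $|c_\xi|^2(1+|\xi|)^{2m}$ into a summable factor and a uniformly small factor. Your exponent bookkeeping checks out: $c_1=c/4$ keeps the first bracket summable, and the second bracket is $\lesssim\|f\|e^{-(3c/8)|\xi|^\beta}$ after absorbing the polynomial, giving the stated $e^{-c_3 n^{\beta/d}}$ decay. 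What your route buys is simplicity and the observation that, for exponentially decaying weights, an entirely non-adaptive frequency selection (only the shift $a$ and the coefficients depend on $f$) already achieves the spectral rate; the paper's greedy selection is the one that is genuinely needed in the polynomial-weight setting of Theorem \ref{approximation-rate-theorem}, and is simply carried over here. One small point worth making explicit if you write this up: the trivial bound for small $n$ uses $\|f\|_{H^m(\Omega)}\lesssim\|f\|_{\mathcal{B}_{\beta,c}(\Omega)}$, which holds because the exponential weight dominates $(1+|\xi|)^m$.
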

Note that in this theorem the implied constant and the constant $c^\prime$ only depend upon $\beta,c,d$ and $m$, but not on $f$ or $n$.
\begin{proof}
 We use a similar argument to the proof of Theorem \ref{approximation-rate-theorem}. First, we apply Lemma \ref{fourier-representation-lemma-general} to the weight $\mu(\xi) = e^{c|\xi|^{\beta}}$, to obtain an $a\in L^{-1}[0,1]^d$ and coefficients $c_\xi$ such that
  \begin{equation}
  f(x) = \sum_{\xi\in L^{-1}\mathbb{Z}^d}c_\xi e^{2\pi \iu  (a+\xi)\cdot x}
 \end{equation}
 and
 \begin{equation}\label{eq_361}
  \sum_{\xi\in L^{-1}\mathbb{Z}^d}  e^{c|a+\xi|^{\beta}}|c_\xi| \lesssim \|f\|_{\mathcal{B}_{\beta,c}(\Omega)}.
 \end{equation}
 As in the proof of Theorem \ref{approximation-rate-theorem}, we note that the frequencies $e^{2\pi \iu  (a+\xi)\cdot x}$ are orthogonal on the enlarger set $\Omega^\prime = [0,L]^d$ and their norms are bounded by \eqref{length-estimate}.
 
 This time, we order the frequencies $\xi\in L^{-1}\mathbb{Z}^d$ such that
 \begin{equation}
 \begin{split}
  (1+|\xi_1|)^{2k}e^{-c|a+\xi_1|^{\beta}}|c_{\xi_1}| &\geq  (1+|\xi_2|)^{2k}e^{-c|a+\xi_2|^{\beta}}|c_{\xi_2}| \\
  &\geq  (1+|\xi_3|)^{2k}e^{-c|a+\xi_3|^{\beta}}|c_{\xi_3}| \geq \cdots.
  \end{split}
 \end{equation}
 Choosing $S_n = \{\xi_1,...,\xi_n\}$ and setting
 \begin{equation}
  f_n(x) = \sum_{\xi\in S_n}c_\xi e^{2\pi \iu  (a+\xi)\cdot x} \in \Sigma_{n,M},
 \end{equation}
 with $M\lesssim \|f\|_{\mathcal{B}_{\beta,c}(\Omega)}$, we obtain, using the argument between equations \eqref{eq_634} and \eqref{eq_638}, that
 \begin{equation}\label{eq_375}
   \|f - f_n\|^2_{H^m(\Omega^\prime)} \leq \left(\sup_{\xi\in S_n^c} |c_\xi|(1+|\xi|)^{2m}e^{-c|a+\xi|^{\beta}} \right)\left(\sum_{\xi\in S_n^c}|c_\xi|e^{c|a+\xi|^{\beta}}\right).
 \end{equation}
 By \eqref{eq_361}, the second factor is $\lesssim \|f\|_{\mathcal{B}_{\beta,c}(\Omega)}$.
 
 For the first factor, the argument between equations \eqref{eq_613} and \eqref{eq_642} implies that
 \begin{equation}\label{eq_381}
  \left(\sup_{\xi\in S_n^c} |c_\xi|(1+|\xi|)^{2m}e^{-c|a+\xi|^{\beta}} \right) \lesssim C_f\left(\sum_{\nu\in S_n} e^{2c|a+\nu|^{\beta}}(1 + |a + \nu|)^{-2m}\right)^{-1}.
 \end{equation}
 We now proceed to lower bound the sum on the right by considering its largest term. Since the sum is over $n$ elements of the lattice $a + L^{-1}\mathbb{Z}^d$, the longest vector, i.e. the largest length $|a+\xi|$ which occurs in the sum, must be $\gtrsim n^{\frac{1}{d}}$. In addition $(1 + |a + \xi|)^{2m} \lesssim e^{2\epsilon|a+\xi|^{\beta}}$ for any $\epsilon > 0$, so we see that there must exist a $c^\prime > 0$ such that
 \begin{equation}
  \left(\sum_{\nu\in S_n} e^{2c|a+\nu|^{\beta}}(1 + |a + \nu|)^{-2m}\right) \gtrsim e^{2c^\prime n^{\frac{\beta}{d}}}.
 \end{equation}
 Plugging this into \eqref{eq_381} and \eqref{eq_375} and using the fact that $\Omega^\prime \subset \Omega$, we get
 \begin{equation}
  \inf_{f_n\in \Sigma_{n,M}} \|f-f_n\|_{H^m(\Omega)} \lesssim \|f\|_{\mathcal{B}_{\beta,c}(\Omega)}e^{-c^\prime n^{\frac{\beta}{d}}},
 \end{equation}
 as desired.

\end{proof}

\section{Approximation Rates for ReLU$^k$ Networks}\label{relu-result-section}
In this section, we consider approximation by neural networks with activation function $$\sigma_k(x) = [\max(0,x)]^k$$ for $k=\mathbb{Z}_{\geq 0}$ (here we set $0^0 = 0$, i.e. $\sigma_0(x)$ is the Heaviside function). Specifically, we consider approximating a function $f$ by elements of the set
\begin{equation}
 \Sigma^k_{n} = \left\{\sum_{i=1}^n a_i\sigma_k(\omega_i\cdot x + b_i):~\omega_i\in S^{d-1},~b_i\in \mathbb{R},~a_i\in\mathbb{C}\right\},
\end{equation}
where we allow the coefficients $a_i$ to have arbitrarily large $\ell^1$-norm.

We will use Lemma \ref{fourier-representation-lemma} to obtain an improved approximation rate for such networks on the spectral Barron space $\mathcal{B}^m(\Omega)$. To do this, we introduce a multiscale approximation of the complex exponentials $e^{2\pi \iu x}$ using splines. We begin by recalling some facts about spline interpolation which will be important in the following analysis. We will refer to \cite{devore1993constructive} for most of this material. Note also that similar arguments have been used to study the approximation properties of neural networks in one dimension \cite{costarelli2015approximation,costarelli2013approximation}.

Instead of working directly with $\sigma_k$ it is more convenient to introduce the cardinal B-splines
\begin{equation}\label{card-b-splines-def}
 N_k(x) = \frac{1}{k!}\sum_{i=0}^{k+1}(-1)^i\binom{k+1}{i}\sigma_k(x-i) \in \Sigma_{k+2}^k,
\end{equation}
which are compactly supported on $[0,k+1]$. 

Let $\mathcal{S}^k_\lambda$ denote the Schoenberg space of piecewise degree $k$ splines on $\mathbb{R}$ with knots at $\lambda \mathbb{Z}$. It is well known that every spline $S\in\mathcal{S}^k_1$ can be written as
\begin{equation}\label{eq_963}
 S(x) = \sum_{j=-\infty}^\infty c_j(S)N_k(x-j),
\end{equation}
where $c_j$ are the de Boor-Fix functionals (see \cite{devore1993constructive}, section 5.3). Since the knots of the spline are all evenly spaced, the functionals $c_j(S)$ are all translations of the functional $c_0$, i.e.
\begin{equation}\label{eq_967}
 c_j(S) = c_0(S(\cdot-j)).
\end{equation}
Moreover, consider change the spacing between the knots, i.e. consider $\mathcal{S}^k_\lambda$. Then, if $S\in \mathcal{S}^k_\lambda$, $S(\lambda x)\in \mathcal{S}^k_1$ and equations \eqref{eq_963} and \eqref{eq_967} imply that
\begin{equation}
 S(\lambda x) = \sum_{j=-\infty}^\infty c_j(S(\lambda\cdot))N_k(x-j),
\end{equation}
so that
\begin{equation}
 S(x) = \sum_{j=-\infty}^\infty c_{j,\lambda}(S)N_k(\lambda^{-1}x-j),
\end{equation}
where the functionals $c_{j,\lambda}$ are given by $c_{j,\lambda}(S) = c_0(S(\lambda(\cdot - j)))$.

Now, we see from \cite{devore1993constructive}, Lemma 4.1 of Chapter 5, that
\begin{equation}\label{eq_983}
 |c_{j,\lambda}(S)| \leq C\|S\|_{L^\infty([\lambda j, \lambda (j+k+1)])},
\end{equation}
for a fixed constant $C$. Thus, by the Hahn-Banach theorem, we can extend the de Boor-Fix functionals $c_{j,\lambda}$ to functionals $\gamma_{j,\lambda}$ on $L^\infty([\lambda j, \lambda (j+k+1)])$ which satisfy the same bound. This allows us to define the quasi-interpolation operators
\begin{equation}\label{quasi-interpolation}
 Q_\lambda(f) = \sum_{j=-\infty}^\infty \gamma_{j,\lambda}(f)N_k(\lambda^{-1}x-j),
\end{equation}
which are bounded in $L^\infty$ (uniformly in $\lambda$) and satisfy $Q(S) = S$ for all splines $S\in \mathcal{S}^k_\lambda$ (see \cite{devore1993constructive}, section 5.4). Note that here and in what follows, we suppress the dependence on $k$ of the operators $Q_\lambda$ and the de Boor-Fix functions $\gamma_{j,\lambda}$ to simplify notation.

We are now in the position to introduce the following multiscale piecewise degree $k$ approximation to $e^{2\pi \iu   x}$. We write
\begin{equation}\label{multiscale-approx}
 e^{2\pi \iu  x} = Q_{2^{-1}}(e^{2\pi \iu   x}) + \sum_{l=2}^\infty [Q_{2^{-l}}(e^{2\pi \iu   x}) - Q_{2^{-(l-1)}}(e^{2\pi \iu   x})] = \sum_{l=1}^\infty h_l(x),
\end{equation}
where
\begin{equation}
 h_l(x) = Q_{2^{-l}}(e^{2\pi \iu   x}) - Q_{2^{-(l-1)}}(e^{2\pi \iu   x}),
\end{equation}
for $l > 1$ and $h_1(x) = Q_{2^{-1}}(e^{2\pi \iu   x})$. Since we clearly have $\mathcal{S}^k_{2^{-(l-1)}} \subset \mathcal{S}^k_{2^{-l}}$, we see that
$$Q_{2^{-l}}(Q_{2^{-(l-1)}}(e^{2\pi \iu   x})) = Q_{2^{-(l-1)}}(e^{2\pi \iu   x}),$$ 
so that we can rewrite $h_l$ as

\begin{equation}\label{eq_995}
h_l(x) = Q_{2^{-l}}\left(e^{2\pi \iu   x} - Q_{2^{-(l-1)}}(e^{2\pi \iu   x})\right) = Q_{2^{-l}}(e_{l-1}(x)) = \sum_{j=-\infty}^\infty \alpha_{j,l}N_k(2^{l}x-j),
\end{equation}
where the error $e_{l-1}$ is given by $e_{l-1}(x) = e^{2\pi \iu   x} - Q_{2^{-(l-1)}}(e^{2\pi \iu   x})$ and the coefficients $\alpha_{j,l}$ are given by $\alpha_{j,l} = \gamma_{j,2^{-l}}(e_{l-1})$.

We have the following lemma concerning this this piecewise degree $k$ approximation of $e^{2\pi \iu   x}$.
\begin{lemma}\label{multilevel-spline}
 The above expansion of $e^{2\pi \iu x}$ has the following properties.
 \begin{itemize}
  \item $\|e_l\|_{\infty} \lesssim 2^{-(k+1)l}$.
  \item The coefficients $\alpha_{j,l}$ in equation \eqref{eq_995} satisfy $|\alpha_{j,l}| \lesssim 2^{-(k+1)l}$.
  \item The series in \eqref{multiscale-approx} converges in $W^{m,\infty}(\mathbb{R})$ for $0 \leq m \leq k$.
 \end{itemize}
\end{lemma}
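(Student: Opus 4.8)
The plan is to verify the three bullets in order, using only the properties of the quasi-interpolation operators $Q_\lambda$ recorded above: (i) they are bounded on $L^\infty(\mathbb{R})$ uniformly in $\lambda$, a consequence of the functional bound \eqref{eq_983} together with the finite overlap of the B-spline supports; (ii) they reproduce $\mathcal{S}^k_\lambda$, and in particular every polynomial of degree at most $k$; and (iii) they are local, in the sense that $Q_\lambda(f)(x)$ depends only on the restriction of $f$ to an interval of length $O(\lambda)$ about $x$ (since $\gamma_{j,\lambda}(f)$ depends only on $f|_{[\lambda j,\lambda(j+k+1)]}$ and $N_k(\lambda^{-1}x-j)$ is supported where $x\in[\lambda j,\lambda(j+k+1)]$).

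For the first bullet I would prove the standard quasi-interpolation error estimate $\|f-Q_\lambda f\|_{\infty}\lesssim\lambda^{k+1}\|f^{(k+1)}\|_\infty$ and apply it to $f(x)=e^{2\pi\iu x}$ with $\lambda=2^{-l}$, using $\|f^{(k+1)}\|_\infty=(2\pi)^{k+1}$. To obtain the error estimate, fix $x$, let $I$ be a subinterval of length $\sim\lambda$ containing $x$, and let $\tilde I\supset I$ be the bounded enlargement (still of length $\sim\lambda$) whose values of $f$ influence $Q_\lambda f$ on $I$; let $p$ be the degree-$k$ Taylor polynomial of $f$ at the midpoint of $\tilde I$. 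Since $Q_\lambda p=p$ and $Q_\lambda$ is linear and local, on $I$ one has $f-Q_\lambda f=(f-p)-Q_\lambda(f-p)$, hence
\[
 |f(x)-Q_\lambda f(x)|\le\|f-p\|_{L^\infty(\tilde I)}+C\|f-p\|_{L^\infty(\tilde I)}\lesssim\lambda^{k+1}\|f^{(k+1)}\|_\infty,
\]
where $C$ is the uniform $L^\infty$ bound for $Q_\lambda$ and the last step is Taylor's theorem together with $|\tilde I|\sim\lambda$.

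The second bullet is then immediate: by definition $\alpha_{j,l}=\gamma_{j,2^{-l}}(e_{l-1})$, and since the $\gamma_{j,\lambda}$ inherit the bound \eqref{eq_983} we get $|\alpha_{j,l}|\le C\|e_{l-1}\|_{L^\infty([2^{-l}j,\,2^{-l}(j+k+1)])}\le C\|e_{l-1}\|_\infty\lesssim 2^{-(k+1)(l-1)}\lesssim 2^{-(k+1)l}$, the fixed factor $2^{k+1}$ being absorbed into the implied constant. For the third bullet, differentiate \eqref{eq_995} termwise: $h_l^{(m)}(x)=2^{lm}\sum_j\alpha_{j,l}N_k^{(m)}(2^lx-j)$. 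For $0\le m\le k$, $N_k\in C^{k-1}$ and $N_k^{(k)}$ is piecewise constant, so $N_k^{(m)}$ is bounded with support in $[0,k+1]$; at any point at most $k+1$ of the translates are nonzero, so the second bullet gives $\|h_l^{(m)}\|_\infty\lesssim 2^{lm}2^{-(k+1)l}=2^{-(k+1-m)l}$, hence $\|h_l\|_{W^{m,\infty}(\mathbb{R})}\lesssim 2^{-(k+1-m)l}$. Since $m\le k$ forces $k+1-m\ge 1$, the series $\sum_{l\ge1}h_l$ is absolutely summable, and therefore convergent, in the Banach space $W^{m,\infty}(\mathbb{R})$; by the telescoping identity its partial sums equal $Q_{2^{-L}}(e^{2\pi\iu\cdot})$, which converge uniformly to $e^{2\pi\iu x}$ by the first bullet, so the limit is $e^{2\pi\iu x}$, as claimed.

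The only step that is not pure bookkeeping is the quasi-interpolation error bound in the first bullet — specifically, correctly exploiting locality plus polynomial reproduction of $Q_\lambda$ to replace $f$ by a local Taylor polynomial, and checking that $\|Q_\lambda\|_{L^\infty\to L^\infty}$ is bounded uniformly in $\lambda$. Both of these are consequences of \eqref{eq_983} and the bounded overlap of the supports of the shifted B-splines; once they are in place, the second and third bullets follow directly from the pointwise bounds on $\|e_l\|_\infty$ and $|\alpha_{j,l}|$.
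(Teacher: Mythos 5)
Your proof is correct and follows essentially the same route as the paper: the second and third bullets are argued identically (the functional bound \eqref{eq_983} for the coefficients, then termwise differentiation, bounded overlap of the B-spline supports, and geometric summability with identification of the limit via the uniform convergence from the first bullet). The only difference is that for the first bullet the paper simply cites the standard quasi-interpolation error estimate (Theorem 4.5 of DeVore--Lorentz), whereas you re-derive it via locality, uniform $L^\infty$-boundedness, and local replacement by a Taylor polynomial of degree $k$ --- which is precisely the proof of the cited result, so nothing is gained or lost.
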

Note that the implied constants in the above lemma and the following proof only depend upon $k$ and not upon $l$ or $j$.
\begin{proof}
 The first statement follows immediately from Theorem 4.5 in \cite{devore1993constructive}, since $e_l(x) = e^{2\pi \iu   x} - Q_{2^{-l}}(e^{2\pi \iu   x})$ and $e^{2\pi \iu x}\in W^{k+1,\infty}(\mathbb{R})$.

 For the second statement, we note that
 \begin{equation}
  |\alpha_{j,l}| = |\gamma_{j,2^{-l}}(e_{l-1})| \leq C\|e_{l-1}\|_{L^\infty(\mathbb{R})} \lesssim 2^{-(k+1)(l-1)} \lesssim 2^{-(k+1)l},
 \end{equation}
 where the first inequality is due to the fact that $\gamma_{j,2^{-j}}$ is a Hahn-Banach extension of the de Boor-Fix functional $c_{j,2^{-j}}$ which satisfies \eqref{eq_983}.
 
 Finally, note that since $\|e_l\|_{L^\infty(\mathbb{R})} \lesssim 2^{-(k+1)l}\rightarrow 0$, we have that the series in \eqref{multiscale-approx} converges in $L^\infty(\mathbb{R})$ to $e^{2\pi \iu x}$. We now claim that
 \begin{equation}
  \|h_l\|_{W^{m,\infty}(\mathbb{R})} \lesssim 2^{-(k+1-m)l}.
 \end{equation}
 First, we note that simply by taking derivatives, we get
 \begin{equation}
  \|N_p(2^lx-j)\|_{W^{m,\infty}(\mathbb{R}, dx)} \lesssim 2^{ml}.
 \end{equation}
 Second, the B-splines $N_k(2^lx-j)$ are compactly supported and each point $x$ is covered by at most $p+1$ of them. Hence
 \begin{equation}
 \begin{split}
  \|h_l\|_{W^{m,\infty}} &= \left\|\sum_{j=-\infty}^\infty \alpha_{j,l}N_k(2^{l}x-j)\right\|_{W^{m,\infty}} \\
  &\leq (k+1)\sup_{j}|\alpha_{j,l}|\|N_k(2^lx-j)\|_{W^{m,\infty}(\mathbb{R}, dx)} \\
  &\lesssim 2^{-(k+1-m)l},
  \end{split}
 \end{equation}
 since $\alpha_{j,l}\lesssim 2^{-(k+1)l}$.
 
 This means that if $m\leq k$, then $\sum_{l=1}^\infty \|h_l\|_{W^{m,\infty}}$ is summable and hence the sum in \eqref{multiscale-approx} converges in $W^{m,\infty}(\mathbb{R})$. Clearly, its limit must be the same as the limit in $L^\infty$ and thus it converges to $e^{2\pi \iu x}$.
\end{proof}

Combining the multiscale expansion \eqref{multiscale-approx} with Lemma \ref{fourier-representation-lemma} and some ideas from \cite{makovoz1996random}, we obtain the following theorem.

\begin{theorem}\label{piecewise-poly-approx-theorem}
 Let $\Omega = [0,1]^d$ and $f\in \mathcal{B}^s(\Omega)$ for $s \geq \frac{1}{2}$. Let $k \in \mathbb{Z}_{\geq 0}$ and $m\geq 0$, with $m\leq s-\frac{1}{2}$ and $m < k + \frac{1}{2}$. Then for $n\geq 2$,
 \begin{equation}\label{bound-equation}
  \inf_{f_n\in \Sigma^k_{n}}\|f - f_n\|_{H^m(\Omega)} \lesssim \|f\|_{\mathcal{B}^s}n^{-t}\log(n)^q,
 \end{equation}
 where the exponent $t$ is given by
 \begin{equation}
 \begin{split}
 t &= \frac{1}{2} + \min\left(\frac{2(s-m)-1}{2(d+1)}, k-m+\frac{1}{2}\right) \\
 &= \begin{cases}
            \frac{1}{2}+\frac{2(s-m)-1}{2(d+1)} & \text{if}~s < (d+1)\left(k-m+\frac{1}{2}\right) + m + \frac{1}{2} \\
            k-m+1 & \text{if}~s \geq (d+1)\left(k-m+\frac{1}{2}\right) + m + \frac{1}{2}
            \end{cases}
\end{split}
\end{equation}
 and $q$ is given by $$q = \begin{cases}
            0 & \text{if}~s < (d+1)\left(k-m+\frac{1}{2}\right) + m + \frac{1}{2} \\
            1 & \text{if}~s > (d+1)\left(k-m+\frac{1}{2}\right) + m + \frac{1}{2}\\
            1 + (k-m+\frac{1}{2}) & \text{if}~s = (d+1)\left(k-m+\frac{1}{2}\right) + m + \frac{1}{2}
           \end{cases}.$$
\end{theorem}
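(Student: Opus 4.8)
The plan is to turn $f$ into a superposition over a dictionary of ridge splines and then run a Makovoz-type stratified sampling argument on that dictionary.

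\textbf{Step 1: from $f$ to a ridge-spline dictionary.} Fix $L>1$. By Corollary~\ref{fourier-representation-lemma} we write, on $\Omega$,
\[
 f(x)=\sum_{\xi\in L^{-1}\mathbb{Z}^d} c_\xi\, e^{2\pi\iu(a+\xi)\cdot x},\qquad \sum_\xi (1+|a+\xi|)^s|c_\xi|\lesssim \|f\|_{\mathcal{B}^s}.
\]
The finitely many $\xi$ with $|a+\xi|$ bounded contribute exponentials of bounded frequency; being smooth, these are handled separately by a bounded-order spline approximation along a fixed direction, so we may assume $|a+\xi|\gtrsim 1$ below. For each remaining $\xi$ apply the multiscale expansion \eqref{multiscale-approx}--\eqref{eq_995} to $t\mapsto e^{2\pi\iu t}$ at $t=(a+\xi)\cdot x$, yielding
\[
 f(x)=\sum_{\xi}\sum_{l\ge 1}\sum_{j} c_\xi\,\alpha_{j,l}\, N_k\!\big(2^l(a+\xi)\cdot x-j\big).
\]
By \eqref{card-b-splines-def} and the homogeneity $\sigma_k(cy)=c^k\sigma_k(y)$ for $c>0$, each term $N_k(2^l(a+\xi)\cdot x-j)$ is a fixed linear combination of at most $k+2$ genuine neurons $a'\sigma_k(\omega\cdot x+b')$ with $\omega=(a+\xi)/|a+\xi|\in S^{d-1}$. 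Thus $f$ lies in the (infinite) hull of the dictionary of atoms $g_{\xi,l,j}(x)=N_k(2^l(a+\xi)\cdot x-j)$ indexed by a frequency $\xi$, a scale $l$, and a knot $j$.

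\textbf{Step 2: total-variation bound.} Since $N_k$ is supported on $[0,k+1]$, for $x\in\Omega$ only $\lesssim 2^l(1+|a+\xi|)$ knots $j$ are active; by Lemma~\ref{multilevel-spline} the coefficients obey $|\alpha_{j,l}|\lesssim 2^{-(k+1)l}$; and since each $x$-derivative of $g_{\xi,l,j}$ costs a factor $2^l|a+\xi|$ while the support of $g_{\xi,l,j}$ in $\Omega$ has measure $\lesssim 2^{-l}(1+|a+\xi|)^{-1}$, one gets $\|g_{\xi,l,j}\|_{H^m(\Omega)}\lesssim 2^{(m-1/2)l}(1+|a+\xi|)^{m-1/2}$ (here $m<k+\tfrac12$ is used so that the $H^m$-norm of a degree-$k$ spline has this scaling). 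Multiplying the three bounds and summing,
\[
 \sum_{\xi,l,j}|c_\xi\alpha_{j,l}|\,\|g_{\xi,l,j}\|_{H^m(\Omega)}
 \lesssim \Big(\sum_{l\ge1}2^{-(k-m+\frac12)l}\Big)\sum_\xi(1+|a+\xi|)^{m+\frac12}|c_\xi|\lesssim \|f\|_{\mathcal{B}^s},
\]
the geometric series converging because $m<k+\tfrac12$ and the $\xi$-sum being finite because $m+\tfrac12\le s$. Feeding this into Maurey's lemma already gives the rate $\|f\|_{\mathcal{B}^s}n^{-1/2}$; the theorem is about going beyond it.

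\textbf{Step 3: Makovoz refinement and optimization.} First discard the scales $l>L^\ast$: by Lemma~\ref{multilevel-spline} and the chain rule this costs $\lesssim 2^{-(k+1-m)L^\ast}\|f\|_{\mathcal{B}^s}$ in $H^m(\Omega)$, and (if $s$ is below the stated threshold) also discard the frequencies $|a+\xi|>R^\ast$, costing $\lesssim (R^\ast)^{m-s}\|f\|_{\mathcal{B}^s}$ by \eqref{length-estimate}. On the truncated parameter region --- of essential dimension $d+1$: $d-1$ for the direction $\omega\in S^{d-1}$, one for the frequency magnitude $|a+\xi|$ and one for the knot $j$, with the capped scale $l$ accounted separately --- run Makovoz's stratified sampling: partition the region, carrying the coefficient measure of Step 1, into $\lesssim n$ cells on which the atoms vary by at most $\delta$ in $H^m(\Omega)$, keep one atom per cell, and bound the error by $\tfrac{1}{\sqrt n}\sum_{\text{cells}}(\text{mass})\times(\text{diameter})$. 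Because deeper scales produce smaller atoms (the factor $2^{(m-1/2)l}$) and the coefficient measure decays in frequency at the rate set by $s$, the number of cells needed to reach accuracy $\delta$ scales like a power of $\delta^{-1}$ whose exponent is controlled by $d+1$ and $s-m$ --- morally, this replaces the $n^{-1/2-(s-m)/d}$ of Theorem~\ref{approximation-rate-theorem} by $n^{-1/2-((s-m)-1/2)/(d+1)}$. Balancing this against $L^\ast$ and $R^\ast$ yields $t=\tfrac12+\min\!\big(\tfrac{2(s-m)-1}{2(d+1)},\,k-m+\tfrac12\big)$; in the regime $s\ge(d+1)(k-m+\tfrac12)+m+\tfrac12$ the optimal choice forces $2^{L^\ast}\sim n$, i.e. $L^\ast\sim\log n$, and since each of these $\lesssim L^\ast$ scales needs its own family of cells one loses a factor $\log n$, a full power generically and the larger power $1+(k-m+\tfrac12)$ exactly at the critical exponent.

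\textbf{Main obstacle.} The crux is the multiscale covering/stratification estimate of Step 3 in $H^m(\Omega)$. One must simultaneously track (i) the $2^{(m-1/2)l}$ shrinkage of B-spline atoms across scales, (ii) the frequency decay of the coefficient measure dictated by the spectral Barron norm --- it is this weighting, not the bare metric covering number, that makes the effective number of cells depend on $s$ --- and (iii) the $(d+1)$-dimensional geometry of the parameter set, and then optimize three-fold over the cell size $\delta$ and the cutoffs $L^\ast,R^\ast$. Getting this bookkeeping sharp is exactly what produces the two-regime exponent $t$ and the precise logarithmic powers $q$; by contrast Steps 1--2 are essentially the substitution of the B-spline expansion into Corollary~\ref{fourier-representation-lemma} together with routine estimates, and the low-frequency terms are a standard separate nuisance.
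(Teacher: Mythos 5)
Your Steps 1--2 follow the paper's proof almost exactly: Corollary \ref{fourier-representation-lemma}, the multiscale B-spline expansion of $e^{2\pi\iu t}$, the knot count $n_{\xi,l}\lesssim 2^l(1+|\xi|)$, and the atom norm $\lesssim 2^{(m-1/2)l}(1+|a+\xi|)^{m-1/2}$ all match (your separate treatment of bounded frequencies is unnecessary, since the $(1+|a+\xi|)$ weights already handle them). The problem is Step 3, which you yourself flag as ``the main obstacle'': it is precisely the part of the argument that produces the exponent $t$ and the powers $q$, and you assert its conclusion (``morally, this replaces\dots'', ``Balancing\dots yields $t=\dots$'') rather than derive it. As written, the proposal proves the $n^{-1/2}$ rate and a plan, not the theorem.

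Concretely, two ingredients are missing. First, the mechanism for beating $n^{-1/2}$: the paper does \emph{not} truncate in $l$ and $R$ and then stratify a compact $(d+1)$-dimensional parameter region. Instead it renormalizes the atoms, $\phi_{\xi,l,p}=2^{l(1+\delta)}(1+|\xi|)\,\alpha_{j_p,l}(1+|a+\xi|)^{-s}\psi_{\xi,l,p}$, so that the coefficient sequence is in $\ell^1$ with norm $\lesssim\delta^{-1}\|f\|_{\mathcal{B}^s}$ and $\|\phi_{\xi,l,p}\|_{H^m}\lesssim 2^{-l(k-m+\frac12-\delta)}(1+|\xi|)^{m-s+\frac12}$; Makovoz's theorem then gives error $\lesssim \delta^{-1}\|f\|_{\mathcal{B}^s}\,\epsilon_n(\Phi)\,n^{-1/2}$, and $\epsilon_n(\Phi)$ is bounded by covering all small atoms with a single ball at the origin and counting the atoms with $\|\phi_{\xi,l,p}\|_{H^m}>\epsilon/2$. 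That count is a lattice sum $\sum_{|\xi|\le R}(1+|\xi|)^{1+\frac{m-s+1/2}{k-m+1/2-\delta}}$, and the dichotomy between this sum growing like a power of $R$ or being summable is exactly what produces the two regimes of $t$; your truncation-plus-cells scheme never confronts this computation, and without it the claimed exponent $\frac{2(s-m)-1}{2(d+1)}$ is unsupported. Second, the logarithms: in the paper they come from optimizing the auxiliary parameter $\delta$ (introduced to make $\sum_l n_{\xi,l}2^{-l(1+\delta)}$ converge) in the bound $\delta^{-1}n^{-(k-m+\frac12-\delta)}n^{-1/2}$, taking $\delta\sim 1/\log n$, with the extra $\delta^{-(k-m+\frac12)}$ at the critical smoothness giving the power $1+(k-m+\frac12)$. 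Your heuristic ``one log per scale'' points in the right direction but does not establish either the single power of $\log n$ in the supercritical regime or the precise power at the endpoint. To complete the proof you need to carry out this covering-width count in both regimes and the $\delta$-optimization explicitly.
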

Before beginning the proof, we remark that all of the implied constants in the $\eqsim$, $\gtrsim$, and $\lesssim$ can be seen to depend only on $s,k,m,d,L$ and $\delta$ ($L$ and $\delta$ chosen during the course of the proof), but not on $f$ or $n$. Further, we remark that the suppressed constant may depend exponentially on the dimension, i.e. as $A^d$ for some $A$. Finally, note that the maximal possible rate of $s-m+1$, which is achieved for sufficiently large $s$, is exactly the best achievable rate in one dimension. In Theorem \ref{relu-lower-bound} we use this fact to show that the rate of $s-m+1$ cannot be improved upon no matter how large $s$ is. It is an open problem whether such a rate can be obtained with less smoothness.

Comparing with other results in the literature, we see for instance that the results in \cite{klusowski2018approximation} apply to the cases $k=1,m=0$ (ReLU) and $k=2,m=0$ (ReLU$^2$). Furthermore, in \cite{CiCP-28-1707}, the general case $0\leq m\leq k$ is considered. In all of these cases the rate previously obtained was $O(n^{-\frac{1}{2}-\frac{1}{d}})$, while the rates in Theorem \ref{piecewise-poly-approx-theorem} are $O(n^{-\frac{1}{2}-\frac{2k+1}{2(d+1)}})$, which are significantly better for large $k$ and large $d$. However, the rates in Theorem \ref{piecewise-poly-approx-theorem} were obtained without the $\ell^1$-norm bound on the coefficients as in \cite{klusowski2018approximation} and \cite{CiCP-28-1707}. It is open whether the same rates can also be obtained with $\ell^1$-bounded  coefficients.

\begin{proof}
   Choose $L > 1$. Using Corollary \ref{fourier-representation-lemma}, we see that there exists an $a\in L^{-1}[0,1]^d$ and coefficients $a_\xi$ such that
 \begin{equation}\label{eq_675}
  f(x) = \sum_{\xi\in L^{-1}\mathbb{Z}^d} a_\xi (1+|a+\xi|)^{-s}e^{2\pi \iu  (a + \xi)\cdot x}
 \end{equation}
 and $\sum |a_\xi|\lesssim \|f\|_{\mathcal{B}^s}$. Here the suppressed constant depends potentially exponentially on the dimension, by the remarks in the previous section.
 
 We expand $e^{2\pi \iu   (a+\xi)\cdot x}$ using \eqref{multiscale-approx} to get
 \begin{equation}
  e^{2\pi \iu  (a + \xi)\cdot x} = \sum_{l=1}^\infty h_l((a + \xi)\cdot x),
 \end{equation}
 which holds in $W^{m,\infty}(\mathbb{R}^d)$ and thus in $H^m(\Omega)$ since $\Omega$ is bounded. 
 
 Expanding $h_l$ using equation \eqref{eq_995} and plugging this into equation \eqref{eq_675}, we obtain (in $H^m(\Omega)$)
 \begin{equation}
  f(x) = \sum_{\xi\in L^{-1}\mathbb{Z}^d}\sum_{l=1}^\infty \sum_{j=-\infty}^\infty a_\xi \alpha_{j,l} (1+|a+\xi|)^{-s}N_k(2^l(a+\xi)\cdot x - j).
 \end{equation}
 
 Now, since $x\in \Omega$, $\Omega$ is a bounded set, and $N_k$ is compactly supported, the number of non-zero terms in the inner-most sum above is finite. Indexing the values of $j$ for which $N_k(2^l(a+\xi)\cdot x - j)$ is non-zero for $x\in \Omega$ as $j_1,...,j_{n_{\xi, l}}$, we get
 \begin{equation}
  f(x) = \sum_{\xi\in L^{-1}\mathbb{Z}^d}\sum_{l=1}^\infty \sum_{p=1}^{n_{\xi,l}} a_\xi \alpha_{j_p,l} (1+|a+\xi|)^{-s} \psi_{\xi,l,p}(x),
 \end{equation}
 where
 \begin{equation}\label{psi-definition}
  \psi_{\xi,l,p}(x) = N_k(2^l(a+\xi)\cdot x - j_p).
 \end{equation}
 A straightforward calculation utilizing the compact support of $N_k$ implies that
 \begin{equation}\label{eq_454}
  \|\psi_{\xi,l,p}\|_{H^m(\Omega)} \lesssim 2^{l\left(m-\frac{1}{2}\right)}(1 + |\xi|)^{\left(m-\frac{1}{2}\right)}.
 \end{equation}
 Further, note that the number of terms $n_{\xi,l}$ satisfies
 \begin{equation}\label{eq_452}
  n_{\xi,l} \lesssim 2^l(1 + |\xi|).
 \end{equation}
This follows since for $x\in \Omega$, $y = 2^l(a + \xi)\cdot x$ takes on values in an interval of length at most $2^l|a + \xi|\text{diam}(\Omega)$ and $N_k$ has compact support of size $k+1$.

Let $\delta > 0$ to be specified later. We proceed to write
\begin{equation}\label{eq_458}
  f(x) = \sum_{\xi\in L^{-1}\mathbb{Z}^d}\sum_{l=1}^\infty \sum_{p=1}^{n_{\xi,l}} a_\xi 2^{-l(1+\delta)}(1+|\xi|)^{-1} \phi_{\xi,l,p}(x),
 \end{equation}
 where 
 \begin{equation}\label{eq_463}
  \phi_{\xi,l,p}(x) = 2^{l(1+\delta)}(1+|\xi|)\alpha_{j_p,l} (1+|a+\xi|)^{-s} \psi_{\xi,l,p}(x).
 \end{equation}
 Using \eqref{eq_463} and \eqref{eq_454} combined with the bound on $|\alpha_{j,l}|$ from Lemma \ref{multilevel-spline}, we calculate
 \begin{equation}\label{eq_470}
  \|\phi_{\xi,l,p}\|_{H^m(\Omega)} \lesssim 2^{-l(k-m+\frac{1}{2}-\delta)}(1+|\xi|)^{m-s+\frac{1}{2}}.
 \end{equation}
 
 We now observe that  by \eqref{eq_452} the $\ell^1$ norm of the coefficients of the $\phi_{\xi,l,p}$ in \eqref{eq_458} is bounded, namely
 \begin{equation}
 \begin{split}
  \sum_{\xi\in L^{-1}\mathbb{Z}^d}\sum_{l=1}^\infty \sum_{p=1}^{n_{\xi,l}} |a_\xi 2^{-l(1+\delta)}(1+|\xi|)^{-1}| &=
  \sum_{\xi\in L^{-1}\mathbb{Z}^d}|a_\xi|\sum_{l=1}^\infty n_{\xi,l}2^{-l(1+\delta)}(1+|\xi|)^{-1} \\
  &\lesssim \sum_{\xi\in L^{-1}\mathbb{Z}^d}|a_\xi|\sum_{l=1}^\infty 2^{-l\delta} \\
  & \lesssim \delta^{-1}\|f\|_{\mathcal{B}^m(\Omega)}.
  \end{split}
 \end{equation}
 We can now apply Theorem 1 in \cite{makovoz1996random} (note that this theorem still applies even though the coefficients in \eqref{eq_458} are potentially complex) to $f$ to conclude that there exists an
 \begin{equation}
  f_n = \sum_{i=1}^n a_i\phi_{\xi_i,l_i,p_i}(x)
 \end{equation}
 with $\sum_{i=1}^n|a_i| \lesssim \|f\|_{\mathcal{B}^s(\Omega)}$ such that
 \begin{equation}\label{bound_equation}
  \|f - f_n\|_{H^m(\Omega)} \lesssim \delta^{-1}\|f\|_{\mathcal{B}^s(\Omega)}\epsilon_n(\Phi)n^{-\frac{1}{2}},
 \end{equation}
 where $\Phi = \{\phi_{\xi,l,p}\}$ and $\epsilon_n(\Phi) = \inf\{\epsilon > 0:\Phi~\text{is covered by $n$ balls of diameter $\epsilon$}\}$ is the $n$-covering width of $\Phi$. 
 
 By choosing $\delta = k-m+\frac{1}{2} > 0$ we obtain the result at the endpoint $s = m+\frac{1}{2}$ (where the desired rate is $O(n^{-\frac{1}{2}})$) since by \eqref{eq_470} $$\epsilon_n(\Phi)\leq \epsilon_1(\Phi)\leq \sup \|\phi_{\xi,l,p}\|_{H^m(\Omega)}\lesssim 1.$$
 
 For larger $s$ we need to obtain a sharper bound on $\epsilon_n(\Phi)$. We do this by considering the covering number 
 \begin{equation}
  N_\Phi(\epsilon) = \min\{n:~\text{there is a covering of $\Phi$ by $n$ balls of diameter $\epsilon$}\},
 \end{equation}
 and noting that by definition $\epsilon_n(\Phi) = \inf\{\epsilon > 0: N_\Phi(\epsilon) \leq n\}$. 
 
 Given $\epsilon > 0$, we cover the set $\Phi$ by a single ball of radius $\frac{\epsilon}{2}$ centered at the origin, and cover each of the remaining elements with additional balls. This implies that
 \begin{equation}
  N_\Phi(\epsilon) \leq 1 + \left|\left\{\phi_{\xi,l,p}: \|\phi_{\xi,l,p}\|_{H^m(\Omega)} > \frac{\epsilon}{2}\right\}\right|.
 \end{equation}
 We proceed to count the number of $\phi_{\xi,l,p}$ with large norm. This process is messy but relatively straightforward.
 
 By \eqref{eq_470} we must count the indices $\xi\in L^{-1}\mathbb{Z}^d,l\in \mathbb{Z}_{>0}$ and $s=1,...,n_{\xi,l}$ for which
 \begin{equation}
  \epsilon \lesssim 2^{-l(k-m+\frac{1}{2}-\delta)}(1+|\xi|)^{m-s+\frac{1}{2}}.
 \end{equation}
 We observe that this condition implies that we must choose $\xi$ so that $(1+|\xi|)^{m-s+\frac{1}{2}} \gtrsim \epsilon$ and $l$ so that
 \begin{equation}\label{eq_506}
  2^{l(k-m+\frac{1}{2}-\delta)} \lesssim \epsilon^{-1}(1+|\xi|)^{m-s+\frac{1}{2}}.
 \end{equation}
 In addition, for each of these values of $\xi$ and $l$, we get $n_{\xi,l}\lesssim 2^l(1+|\xi|)$ differt values of $p$. Combining these observations, we see that
 \begin{equation}
  \left|\left\{\phi_{\xi,l,p}: \|\phi_{\xi,l,p}\|_{H^m(\Omega)} > \frac{\epsilon}{2}\right\}\right| \lesssim \sum_{\substack{\xi\in L^{-1}\mathbb{Z}^d\\|\xi|\leq R}} (1+|\xi|) \sum_{l\in L(\epsilon,\xi)} 2^l,
 \end{equation}
 where $R \lesssim \epsilon^{\frac{1}{m-s+\frac{1}{2}}}$ (note that here we require $m-s+\frac{1}{2} < 0$) and $L(\epsilon,\xi)$ consists of indices $l$ which satisfy \eqref{eq_506}. Taking a logarithm, the set $L(\epsilon,\xi)$ can be characterized by
 \begin{equation}
  l\leq \left(k-m+\frac{1}{2}-\delta\right)^{-1}\left(-\log(\epsilon) + \left(m-s+\frac{1}{2}\right)\log(1+|\xi|)\right) + C
 \end{equation}
 for some constant $C$. Using this bound on $l$, combined with the fact that $\sum_{l=1}^k 2^l \lesssim 2^k$, we get
 \begin{equation}
  \sum_{l\in L(\epsilon,\xi)} 2^l \lesssim \epsilon^{\frac{-1}{k-m+\frac{1}{2}-\delta}}(1+|\xi|)^{\frac{m-s+\frac{1}{2}}{k-m+\frac{1}{2}-\delta}}.
 \end{equation}
 So we get
 \begin{equation}\label{eq_524}
  \left|\left\{\phi_{\xi,l,p}: \|\phi_{\xi,l,p}\|_{H^m(\Omega)} > \frac{\epsilon}{2}\right\}\right| \lesssim \epsilon^{\frac{-1}{k-m+\frac{1}{2}-\delta}} \sum_{\substack{\xi\in L^{-1}\mathbb{Z}^d\\|\xi|\leq R}} (1+|\xi|)^{1 + \frac{m-s+\frac{1}{2}}{k-m+\frac{1}{2}-\delta}}.
 \end{equation}
 For the final sum, we distinguish between two cases. 
 
 First, if $m-s+\frac{1}{2} > -(d+1)\left(k-m+\frac{1}{2}\right)$, then we can choose a fixed $\delta = \delta(s,m,k,d) > 0$ small enough so that 
 \begin{equation}
  1 + \frac{m-s+\frac{1}{2}}{k-m+\frac{1}{2}-\delta} > -d.
 \end{equation}
 In this case, by comparing the sum over the lattice $L^{-1}\mathbb{Z}^d$ to an integral, the sum in \eqref{eq_524} satisfies
 \begin{equation}
  \sum_{\substack{\xi\in L^{-1}\mathbb{Z}^d\\|\xi|\leq R}} (1+|\xi|)^{1 + \frac{m-s+\frac{1}{2}}{k-m+\frac{1}{2}-\delta}} \lesssim R^{d+1+\frac{m-s+\frac{1}{2}}{k-m+\frac{1}{2}-\delta}} \lesssim \epsilon^{\frac{d+1}{m-s+\frac{1}{2}} + \frac{1}{k-m+\frac{1}{2}-\delta}},
 \end{equation}
 since $R\lesssim \epsilon^{\frac{1}{m-s+\frac{1}{2}}}$. Combining this with \eqref{eq_524} we get
 \begin{equation}
  \left|\left\{\phi_{\xi,l,p}: \|\phi_{\xi,l,p}\|_{H^m(\Omega)} > \frac{\epsilon}{2}\right\}\right| \lesssim\epsilon^{\frac{d+1}{m-s+\frac{1}{2}}}.
 \end{equation}
 This implies that for small $\epsilon$, $N_\Phi(\epsilon)\lesssim \epsilon^{\frac{d+1}{m-s+\frac{1}{2}}}$ and so
 \begin{equation}
  \epsilon_n(\Phi) \lesssim n^{\frac{m-s+\frac{1}{2}}{d+1}}.
 \end{equation}
 Plugging this into \eqref{bound_equation}, we get
 \begin{equation}\label{eq_549}
  \|f - f_n\|_{H^m(\Omega)} \lesssim \delta(s,m,k,d)^{-1}\|f\|_{\mathcal{B}^s(\Omega)}n^{\frac{m-s+\frac{1}{2}}{d+1}}n^{-\frac{1}{2}}\lesssim \|f\|_{\mathcal{B}^s(\Omega)}n^{-\frac{1}{2}-\frac{s-m-\frac{1}{2}}{d+1}}.
 \end{equation}

 Next, if $m-s+\frac{1}{2} \leq -(d+1)\left(k-m+\frac{1}{2}\right)$, then for any $\delta > 0$ we get
 \begin{equation}
  1 + \frac{m-s+\frac{1}{2}}{k-m+\frac{1}{2}-\delta} < -d.
 \end{equation}
 In this case, the sum in \eqref{eq_524} is summable and we get
 \begin{equation}
  \sum_{\substack{\xi\in L^{-1}\mathbb{Z}^d\\|\xi|\leq R}} (1+|\xi|)^{1 + \frac{m-s+\frac{1}{2}}{k-m+\frac{1}{2}-\delta}} \lesssim 1
 \end{equation}
 if $m-s+\frac{1}{2} < -(d+1)\left(k-m+\frac{1}{2}\right)$, and in the special case where $m-s+\frac{1}{2} = -(d+1)\left(k-m+\frac{1}{2}\right)$, we get
\begin{equation}
  \sum_{\substack{\xi\in L^{-1}\mathbb{Z}^d\\|\xi|\leq R}} (1+|\xi|)^{1 + \frac{m-s+\frac{1}{2}}{k-m+\frac{1}{2}-\delta}} \lesssim \delta^{-1}.
 \end{equation}
 Combining this with \eqref{eq_524} we get
 \begin{equation}
  \left|\left\{\phi_{\xi,l,p}: \|\phi_{\xi,l,p}\|_{H^m(\Omega)} > \frac{\epsilon}{2}\right\}\right| \lesssim \epsilon^{\frac{-1}{k-m+\frac{1}{2}-\delta}},
 \end{equation}
 where we need an extra factor of $\delta^{-1}$ in the special case where $m-s+\frac{1}{2} = -(d+1)\left(k-m+\frac{1}{2}\right)$.
 This implies that up to a factor of $\delta^{-(k-m+\frac{1}{2}-\delta)}$ in this special case, we have
 \begin{equation}
  \epsilon_n(\Phi) \lesssim n^{-(k-m+\frac{1}{2}-\delta)}.
 \end{equation}
 Using \eqref{bound_equation}, we get
 \begin{equation}
  \|f - f_n\|_{H^m(\Omega)} \lesssim \delta^{-1}\|f\|_{\mathcal{B}^s(\Omega)}n^{-(k-m+\frac{1}{2}-\delta)}n^{-\frac{1}{2}},
 \end{equation}
 where the power of $\delta$ is replaced by $-1-(k-m+\frac{1}{2}-\delta)$ in the endpoint case. Finally, optimizing over $\delta$, we get
 \begin{equation}\label{eq_580}
  \|f - f_n\|_{H^m(\Omega)} \lesssim \|f\|_{\mathcal{B}^s(\Omega)}n^{-(k-m+1)}\log(n),
 \end{equation}
 where in the endpoint case the logarithm is taken to the power $1 + (k-m+\frac{1}{2})$.
 
 Combining the results of \eqref{eq_580} and \eqref{eq_549} with the previously discussed result at $s = m + \frac{1}{2}$, we get
 \begin{equation}
  \|f - f_n\|_{H^m(\Omega)} \lesssim \|f\|_{\mathcal{B}^s(\Omega)}n^{-t}\log(n)^q,
 \end{equation}
 where $t = \min\left(\frac{1}{2}+\frac{s-m-\frac{1}{2}}{d+1}, k-m+1\right)$ and $q$ is given by
 \begin{equation}
 q = \begin{cases}
            0 & \text{if}~t < k-m+1 \\
            1 & \text{if}~t < \frac{1}{2}+\frac{2(s-m)-1}{2(d+1)}\\
            1 + (k-m+\frac{1}{2}) & \text{otherwise}
           \end{cases}.
 \end{equation}
 This completes the proof since \eqref{psi-definition}, \eqref{eq_463}, and \eqref{card-b-splines-def} imply that $\phi_{\xi,l,p}\in \Sigma_{k+2}^k$.

\end{proof}

In the case where $f$ is highly smooth, i.e. $f\in \mathcal{B}^s(\Omega)$ with $s > (d+1)(k-m-\frac{1}{2})+m+\frac{1}{2}$, we obtain, up to a logarithmic factor, an approximation rate of $O(n^{-k-1+m})$ in $H^m(\Omega)$. We state this as a separate theorem.
\begin{theorem}\label{high-smoothness-approximation}
 Let $\Omega = [0,1]^d$, $k \in \mathbb{Z}_{\geq 0}$ and $m\geq 0$, with $m < k + \frac{1}{2}$. Suppose that $f\in \mathcal{B}^s(\Omega)$ for $s$ sufficiently large, specifically $$s > (d+1)\left(k-m+\frac{1}{2}\right) + m + \frac{1}{2}.$$ Then for $n\geq 2$,
 \begin{equation}\label{bound-equation}
  \inf_{f_n\in \Sigma^k_{n}}\|f - f_n\|_{H^m(\Omega)} \lesssim \|f\|_{\mathcal{B}^s}n^{m-(k+1)}\log(n).
 \end{equation}
\end{theorem}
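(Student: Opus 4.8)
The plan is to deduce this theorem as an immediate specialization of Theorem \ref{piecewise-poly-approx-theorem} to the parameter regime in which the exponent $t$ attains its maximal value. First I would check that the hypotheses of Theorem \ref{piecewise-poly-approx-theorem} are satisfied for the given $f$, $k$, and $m$. The condition $m < k + \frac{1}{2}$ is assumed directly. Moreover, since $m < k + \frac{1}{2}$ we have $k - m + \frac{1}{2} > 0$, and since $d + 1 > 0$ the hypothesis $s > (d+1)(k-m+\frac{1}{2}) + m + \frac{1}{2}$ forces $s > m + \frac{1}{2} > \frac{1}{2}$; hence both $s \geq \frac{1}{2}$ and $m \leq s - \frac{1}{2}$ hold, so Theorem \ref{piecewise-poly-approx-theorem} applies.

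Next I would read off the relevant branch of the case distinction in Theorem \ref{piecewise-poly-approx-theorem}. Because the inequality $s > (d+1)(k-m+\frac{1}{2}) + m + \frac{1}{2}$ is strict and holds, the minimum defining $t$ is attained at the second argument, so $t = \frac{1}{2} + (k - m + \frac{1}{2}) = k - m + 1$, and the logarithmic exponent is $q = 1$ (the endpoint value $q = 1 + (k - m + \frac{1}{2})$ is excluded precisely because $s$ does not equal the threshold). Substituting $t = k - m + 1$ and $q = 1$ into the conclusion of Theorem \ref{piecewise-poly-approx-theorem} gives
\begin{equation}
 \inf_{f_n\in\Sigma^k_n}\|f - f_n\|_{H^m(\Omega)} \lesssim \|f\|_{\mathcal{B}^s}\, n^{-(k-m+1)}\log(n) = \|f\|_{\mathcal{B}^s}\, n^{m-(k+1)}\log(n),
\end{equation}
which is exactly the claimed bound.

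Since the statement is merely a repackaging of an already-proved estimate, there is essentially no real obstacle; the only point requiring care is the bookkeeping of the case distinction, in particular confirming that the strict inequality on $s$ lands us on $q = 1$ rather than on the $s$-equals-threshold endpoint, where an extra logarithmic power would appear. If one preferred a self-contained argument, one could instead reproduce the relevant portion of the proof of Theorem \ref{piecewise-poly-approx-theorem}: expand each exponential $e^{2\pi\iu(a+\xi)\cdot x}$ via the multiscale spline expansion \eqref{multiscale-approx}, estimate the covering number as in \eqref{eq_524} in the summable regime $m - s + \frac{1}{2} < -(d+1)(k-m+\frac{1}{2})$, optimize the resulting bound over the free parameter $\delta$ to obtain a single factor of $\log(n)$, and conclude via Makovoz's theorem as in \eqref{bound_equation}; but invoking Theorem \ref{piecewise-poly-approx-theorem} directly is cleaner.
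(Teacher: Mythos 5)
Your proposal is correct and matches the paper exactly: the paper states this theorem as an immediate specialization of Theorem \ref{piecewise-poly-approx-theorem} to the regime $s > (d+1)(k-m+\frac{1}{2})+m+\frac{1}{2}$, where $t = k-m+1$ and $q=1$, and gives no separate proof. Your verification of the hypotheses and of which branch of the case distinction applies is the right (and only) content needed.
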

Further, the special case $s = \frac{1}{2}$, $m=0$ shows that the approximation rates obtained in \cite{barron1993universal} apply to a significantly larger class of functions if the $\ell^1$-coefficient bound on the neural network is dropped.
\begin{theorem}\label{improved-barron}
 Let $\Omega = [0,1]^d$ and $f\in B^{s}(\Omega)$ for $s = \frac{1}{2}$. Suppose that $\sigma$ is an arbitrary sigmoidal function. Then
 \begin{equation}
  \inf_{f_n\in \Sigma_n^\sigma}\|f - f_n\|_{L^2(\Omega)} \lesssim \|f\|_{\mathcal{B}^s}n^{-\frac{1}{2}},
 \end{equation}
 where
 \begin{equation}
  \Sigma_n^\sigma = \left\{\sum_{i=1}^n a_i\sigma(\omega_i\cdot x + b_i):~\omega_i\in \mathbb{R}^d,~b_i\in \mathbb{R},~a_i\in\mathbb{C}\right\}.
 \end{equation}

\end{theorem}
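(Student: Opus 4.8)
The plan is to deduce this statement from the endpoint case of Theorem~\ref{piecewise-poly-approx-theorem} together with the elementary fact that an arbitrary sigmoidal function approximates the Heaviside $\sigma_0$ in $L^2$ on a bounded set. First I would apply Theorem~\ref{piecewise-poly-approx-theorem} with $k=0$, $m=0$ and $s=\tfrac12$: the hypotheses $s\geq\tfrac12$, $m\leq s-\tfrac12$ and $m<k+\tfrac12$ all hold (the first two with equality), and in this case the exponent is $t=\tfrac12+\min\!\left(\tfrac{2(s-m)-1}{2(d+1)},\,k-m+\tfrac12\right)=\tfrac12+\min(0,\tfrac12)=\tfrac12$ with $q=0$, so the theorem yields an $f_n^{(0)}\in\Sigma^0_n$ with $\|f-f_n^{(0)}\|_{L^2(\Omega)}\lesssim\|f\|_{\mathcal{B}^{1/2}(\Omega)}n^{-1/2}$ for $n\geq 2$. (Equivalently one could just reproduce the opening of that proof directly: combine Corollary~\ref{fourier-representation-lemma} at $s=\tfrac12$ with Theorem~1 of \cite{makovoz1996random} applied to the dictionary of Heaviside ridge functions; the uniform boundedness of the covering widths $\epsilon_n$ of that dictionary is precisely what produces the $n^{-1/2}$ rate, with no decay to spare.) By construction $f_n^{(0)}$ is a \emph{fixed, finite} linear combination of at most $2n$ Heaviside ridge functions, since it is assembled from the cardinal B-splines of \eqref{card-b-splines-def} and $N_0(y)=\sigma_0(y)-\sigma_0(y-1)$; thus each term of $f_n^{(0)}$ has the form $c\,[\sigma_0(\nu\cdot x+\beta)-\sigma_0(\nu\cdot x+\beta-1)]$ with $\nu\in S^{d-1}$.

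Next I would replace each such Heaviside ridge function by a dilated copy of $\sigma$. For $\nu\in S^{d-1}$ and $\beta\in\mathbb{R}$, the functions $x\mapsto\sigma(t(\nu\cdot x+\beta))$ converge, as $t\to\infty$, pointwise to $\sigma_0(\nu\cdot x+\beta)$ at every $x$ with $\nu\cdot x+\beta\neq 0$, i.e.\ off the hyperplane $\{\nu\cdot x+\beta=0\}$, which has Lebesgue measure zero in $\Omega=[0,1]^d$; since $\sigma$ is bounded and $\Omega$ has finite measure, dominated convergence upgrades this to convergence in $L^2(\Omega)$. (If the sigmoidal $\sigma$ has limits $-1$ and $1$ rather than $0$ and $1$, apply the same argument instead to $\tfrac12[\sigma(t\,\cdot)-\sigma(t(\cdot-1))]$, which converges in $L^2(\Omega)$ to $N_0$.) Because $f_n^{(0)}$ is a finite combination of such ridge functions, for any $\epsilon>0$ one can choose $t$ large enough that there is a $\tilde f_n\in\Sigma_{2n}^\sigma$ with $\|f_n^{(0)}-\tilde f_n\|_{L^2(\Omega)}\leq\epsilon$; take $\epsilon=\|f\|_{\mathcal{B}^{1/2}(\Omega)}n^{-1/2}$. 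The triangle inequality then gives $\|f-\tilde f_n\|_{L^2(\Omega)}\lesssim\|f\|_{\mathcal{B}^{1/2}(\Omega)}n^{-1/2}$, and after absorbing the harmless factor $2$ in the neuron count into the implied constant we obtain the claimed bound.

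There is no serious obstacle here — the entire quantitative content is supplied by Theorem~\ref{piecewise-poly-approx-theorem} — and the only points that need a little care are (i) verifying that the hypotheses of that theorem are met exactly at the corner $s=\tfrac12$, $m=0$, $k=0$, and (ii) performing the sigmoidal-for-Heaviside substitution on the \emph{fixed finite} network $f_n^{(0)}$ rather than on its limiting representation, so that the extra $L^2$ error it introduces can be driven below any prescribed threshold by taking the inner weights large. This is also where the effect of dropping the $\ell^1$ constraint shows up: no bound on $\sum_i|a_i|$ is asserted, so the possibly large coefficients of $f_n^{(0)}$ (and hence of $\tilde f_n$) cause no trouble, whereas Barron's $O(n^{-1/2})$ rate for general sigmoidal $\sigma$ in \cite{barron1993universal} required $f\in\mathcal{B}^1(\Omega)$ precisely in order to control $\sum_i|a_i|$.
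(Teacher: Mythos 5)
Your proposal is correct and follows essentially the same route as the paper: invoke Theorem~\ref{piecewise-poly-approx-theorem} at the corner $k=0$, $m=0$, $s=\tfrac12$ to get the Heaviside case, then replace each Heaviside ridge function in the resulting finite network by $\sigma(t(\omega\cdot x+b))$ for large $t$, using $L^2(\Omega)$ convergence of the dilated sigmoids to $\sigma_0$. The paper's own proof is a two-line version of exactly this argument, so no further comparison is needed.
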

This result is obtained for $f\in B^1(\Omega)$ by Barron \cite{barron1993universal}. Here we show that in fact the condition $f\in B^{\frac{1}{2}}(\Omega)$ is sufficient.
\begin{proof}
 By Theorem \ref{piecewise-poly-approx-theorem}, the result holds if $\sigma = \sigma_0$ is the Heaviside function. For general sigmoidal $\sigma$, the result follows by noting that $\lim_{t\rightarrow \infty} \|\sigma(t(\omega\cdot x-b)) - \sigma_0(\omega\cdot x-b)\|_{L^2(\Omega)} \rightarrow 0$ for any $b$ and $\omega$.
\end{proof}

\section{Lower Bounds for Cosine Networks}\label{lower-bounds-section}
In this section, we derive lower bounds which complement Theorems \ref{approximation-rate-theorem} and \ref{piecewise-poly-approx-theorem}.
We begin with lower bounds on the approximation rate of cosine networks on $\mathcal{B}^s(\Omega)$. In particular, we show that the approximation rate of Theorem \ref{approximation-rate-theorem} cannot be substantially improved when $m = 0$, i.e. when we are approximating in $L^2(\Omega)$. We prove this even when the coefficients are only required to be bounded in $\ell^\infty$, i.e. when approximating from the set
\begin{equation}
 \Sigma^\infty_{n,M} = \left\{\sum_{j=1}^n a_je^{2\pi \iu  \theta_j\cdot x}:~\theta_j\in \mathbb{R}^d,~a_j\in\mathbb{C},~|a_i|\leq M\right\}.
\end{equation}
We have the following result.
\begin{theorem}\label{fourier-lower-bound}
 Let $\Omega = [0,1]^d$ and $s\geq 0$. Then we have
 \begin{equation}
  \limsup_{n\rightarrow \infty} \left[\sup_{\|f\|_{\mathcal{B}^s(\Omega)} \leq 1}\inf_{f_n\in\Sigma^\infty_{n,M}} \|f - f_n\|_{L^2(\Omega)}\right]n^{\frac{1}{2} + \frac{s}{d} + \epsilon} = \infty
 \end{equation}
 for any $M,\epsilon > 0$.
\end{theorem}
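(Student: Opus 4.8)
The plan is an entropy (metric-capacity) argument: for each large $n$ I construct an exponentially large family of ``hard'' trigonometric polynomials lying in the unit ball of $\mathcal{B}^s(\Omega)$ and mutually well separated in $L^2(\Omega)$, and I show that $\Sigma_{n,M}$ --- after projecting onto the relevant block of frequencies and discarding ineffective high-frequency atoms --- has too little metric entropy to approximate all of them. Fix $T=\{\xi\in\mathbb{Z}^d:|\xi|_\infty\le R\}$, write $N:=\#T\eqsim R^d$, and for $\sigma\in\{\pm1\}^T$ put
\[
 f_\sigma(x)=\rho\,N^{-1/2}\!\!\sum_{\xi\in T}\sigma_\xi\,e^{2\pi\iu\xi\cdot x},\qquad x\in\Omega=[0,1]^d.
\]
Since $\{e^{2\pi\iu\xi\cdot x}\}_{\xi\in\mathbb{Z}^d}$ is orthonormal on $[0,1]^d$ we get $\|f_\sigma\|_{L^2(\Omega)}=\rho$ and $\|f_\sigma-f_{\sigma'}\|_{L^2(\Omega)}^2=4\rho^2N^{-1}d_H(\sigma,\sigma')$; multiplying by a fixed Schwartz cutoff $\phi\equiv1$ on $\Omega$ and using $(1+|\xi+\eta|)^s\le(1+|\xi|)^s(1+|\eta|)^s$ with the rapid decay of $\widehat\phi$ gives $\|f_\sigma\|_{\mathcal{B}^s(\Omega)}\lesssim\rho N^{-1/2}\sum_{\xi\in T}(1+|\xi|)^s\lesssim\rho N^{1/2+s/d}$. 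Hence the choice $\rho\eqsim N^{-1/2-s/d}$ puts all $2^{N}$ functions $f_\sigma$ in the unit ball of $\mathcal{B}^s(\Omega)$.

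Suppose for contradiction that every $f_\sigma$ has an approximant $g_\sigma=\sum_{j=1}^n a_j e^{2\pi\iu\theta_j\cdot x}\in\Sigma_{n,M}$ (so $\sum_j|a_j|\le M$) with $\|f_\sigma-g_\sigma\|_{L^2(\Omega)}<\rho/10$. Let $P$ be the $L^2(\Omega)$-orthogonal projection onto $V_T:=\operatorname{span}\{e^{2\pi\iu\xi\cdot x}:\xi\in T\}$; since $Pf_\sigma=f_\sigma$ we have $\|f_\sigma-Pg_\sigma\|_{L^2(\Omega)}<\rho/10$, and $Pg_\sigma=\sum_j a_j v(\theta_j)$, where $v(\theta)\in\mathbb{C}^{T}$ has entries $\prod_{l=1}^d\kappa(\theta_l-\xi_l)$ with $\kappa(t):=\int_0^1 e^{2\pi\iu tx}\,dx$, and $\|v(\theta)\|_{\ell^2}\le\|e^{2\pi\iu\theta\cdot x}\|_{L^2(\Omega)}=1$. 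The decisive point is that $v(\theta)$ is small away from the block $T$: from $|\kappa(t)|\le\min(1,\tfrac1{\pi|t|})$, if $\theta$ lies at $\ell^\infty$-distance at least $r$ from $[-R,R]^d$ then $\|v(\theta)\|_{\ell^2}\le\sqrt N/(\pi r)$. Therefore discarding all atoms of $g_\sigma$ with $\|v(\theta_j)\|_{\ell^2}\le\rho/(10M)$ changes $Pg_\sigma$ by at most $M\cdot\rho/(10M)=\rho/10$, while the remaining at most $n$ atoms have $|\theta_j|_\infty\le R_{\max}\eqsim R+M\sqrt N/\rho$, which is polynomial in $N$ and $M$. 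Consequently $Pg_\sigma$ --- and hence $f_\sigma$ --- lies within $\rho/5$ of the image of
\[
 \Psi:\bigl(\{|a|\le M\}\times[-R_{\max},R_{\max}]^d\bigr)^{\,n}\to\mathbb{C}^{T},\qquad (a_j,\theta_j)_{j=1}^n\mapsto\sum_{j=1}^n a_j v(\theta_j),
\]
after padding with zero atoms as needed.

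It remains to bound the covering numbers of $\operatorname{Image}(\Psi)$ and to contradict this. Since $\kappa$, $\kappa'$, and $\sum_{m\in\mathbb{Z}}|\kappa(t-m)|^2$ are bounded, a direct computation gives $\|\partial_{\theta_l}v(\theta)\|_{\ell^2}\lesssim_d\sqrt R$, so $\Psi$ is Lipschitz with constant polynomial in $n,M,R_{\max}$, and as its domain is a box of diameter polynomial in $M,R_{\max}$ in $n(d+2)$ real coordinates the standard Lipschitz-image estimate gives
\[
 \log\mathcal{N}\bigl(\operatorname{Image}(\Psi),\rho/5\bigr)\ \lesssim_{d,s}\ nd\,\bigl(\log n+\log M+\log N\bigr),
\]
which is $\lesssim_{d,s} nd(\log n+\log M)$ once $N$ is chosen polynomial in $n$. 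On the other hand, cover $\operatorname{Image}(\Psi)$ by $\mathcal{N}(\operatorname{Image}(\Psi),\rho/5)$ balls of radius $\rho/5$; every $f_\sigma$ lies within $2\rho/5$ of some center, and two $f_\sigma,f_{\sigma'}$ within $2\rho/5$ of a common center satisfy $d_H(\sigma,\sigma')\le\tfrac{4}{25}N$, so at most $2^{\gamma N}$ sign vectors with a fixed $\gamma<1$ can cluster near any one center. Hence $\mathcal{N}(\operatorname{Image}(\Psi),\rho/5)\ge 2^{(1-\gamma)N}$, and comparing forces $N\le C_{d,s}\,nd(\log n+\log M)$. Choosing $N=N(n)$ to be a sufficiently large constant multiple of $nd(\log n+\log M)$ contradicts this for all large $n$; thus for these $N(n)$ some $f_{\sigma^\ast}$ with $\|f_{\sigma^\ast}\|_{\mathcal{B}^s(\Omega)}\le1$ has $\inf_{f_n\in\Sigma_{n,M}}\|f_{\sigma^\ast}-f_n\|_{L^2(\Omega)}\gtrsim\rho(n)\eqsim N(n)^{-1/2-s/d}\eqsim\bigl(nd\log(nM)\bigr)^{-1/2-s/d}$. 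Multiplying by $n^{1/2+s/d+\epsilon}$ gives a quantity $\gtrsim n^{\epsilon}(\log n)^{-(1/2+s/d)}\to\infty$, which yields the asserted $\limsup=\infty$ for any fixed $M,\epsilon>0$.

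The main obstacle is the reduction in the second paragraph. Because elements of $\Sigma_{n,M}$ may use arbitrary real frequencies with an arbitrarily large $\ell^1$ coefficient budget, the projection $P(\Sigma_{n,M})$ already fills the entire $M$-ball of $V_T$, so projecting alone accomplishes nothing; one must exploit the decay of $\kappa(t)=\int_0^1 e^{2\pi\iu tx}\,dx$ to confine the effective frequencies to a polynomially bounded box, and then track every polynomial factor carefully so that the logarithm of the covering number of the resulting finite-dimensional parametric family stays $O(\log n)$ rather than $O(N)$ --- it is exactly this logarithmic slack that produces the $n^{\epsilon}$ in the statement.
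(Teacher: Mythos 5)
Your proof is correct and follows essentially the same strategy as the paper's: an exponentially large, well-separated family of normalized trigonometric polynomials on the lattice block $|\xi|_\infty\le R$ inside the unit ball of $\mathcal{B}^s(\Omega)$, projection onto that block, truncation of far-away frequencies via the decay of $\int_0^1 e^{2\pi\iu tx}\,dx$, and a covering-number comparison that forces $R^d\lesssim n\log n$. The only differences are cosmetic --- you use random signs with a Hamming-ball count where the paper invokes Berge's theorem, and you cover the parameter box via a Lipschitz bound where the paper covers frequencies and coefficients separately.
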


Lower bounds for the $\sigma_k$ activation function were obtained for $k=0$ obtained in \cite{makovoz1996random} and for $k\geq 1$ in \cite{klusowski2018approximation}. However, for $\sigma_k$ the lower bounds obtained do not match the best known rates. This gap has recently been closed in \cite{siegel2021sharp}. In contrast, Theorem \ref{approximation-rate-theorem} combined with Theorem \ref{fourier-lower-bound} gives the optimal approximation rate for cosine networks on the spectral Barron space $\mathcal{B}^s(\Omega)$. 

We also remark that a similar lower bound is obtained in section 7.2 of \cite{candes1998ridgelets}. However, the lower bound here is in terms of approximation by dictionary expansions where the size of the dictionary depends polynomially upon the number of terms. In this case the correct tool is to find large hypercubes with the given class \cite{donoho1995empirical}. In contrast, the result we prove applies to the infinite, even non-compact dictionary of Fourier modes and the tool used is the metric entropy. As a consequence, we must assume that the coefficients are bounded. We are not quite sure how to remove this assumption completely although it can be relaxed to a bound which grows polynomially with the number of terms $n$. 

\begin{proof}
 The argument is a modification of the methods in \cite{klusowski2018approximation,makovoz1996random}. The main new difficulty is in dealing with the non-compactness of the set $\{e^{2\pi\iu \theta\cdot x}:\theta\in \mathbb{R}^d\}\subset L^2(\Omega)$. 
 
 Suppose to the contrary that for some $M,\epsilon > 0$, we have
 \begin{equation}\label{eq_717}
  \sup_{\|f\|_{\mathcal{B}^s(\Omega)} \leq 1}\inf_{f_n\in\Sigma_{n,M}} \|f - f_n\|_{L^2(\Omega)} \lesssim n^{-\frac{1}{2} - \frac{s}{d} - \epsilon}.
 \end{equation}
 For $R > 0$ consider the set $$S(R) = \{\phi_\omega(x) := (1+|\omega|)^{-s}e^{2\pi \iu \omega\cdot x}:~\omega\in \mathbb{Z}^d,~|\omega|_\infty \leq R\}.$$
 In the following proof all of the implied constants are independent of $R$.
 
 The elements $\phi_\omega\in S(R)$ are orthogonal in $L^2(\Omega)$ and satisfy $\|\phi_\omega\|_{L^2(\Omega)} = (1+|\omega|)^{-s} \gtrsim R^{-s}$. In addition, it is clear that $\|\phi_\omega(x)\|_{\mathcal{B}^s(\Omega)} \leq 1$.
 
 We now make use of the following combinatorial fact which follows from Berge's theorem (see \cite{berge1973graphs,kainen1993quasiorthogonal}): given a set $S$ of size $n$, there exist at least $2^{cn}$ subsets of $S$ whose pairwise symmetric differences are at least $\frac{n}{4}$, where $c > 0$ is a universal constant (i.e. independent of $n$).
 
 We apply this to the set $S(R)$ to see that there are subsets $S_1,...,S_N\subset S(R)$ with $N = 2^{cR^d}$, such that for any $i\neq j$, we have $|S_i - S_j| \geq \frac{1}{4}R^d$. Consider the elements $\phi_i\in \mathcal{B}^s(\Omega)$ defined by
 \begin{equation}
  \phi_i(x) = \frac{1}{R^d} \sum_{\phi_\omega\in S_i}\phi_\omega(x).
 \end{equation}
 We clearly have $\|\phi_i(X)\|_{\mathcal{B}^s(\Omega)} \leq 1$. Moreover, since $|S_i - S_j| \geq \frac{1}{4}R^d$, $\|\phi_\omega\|_{L^2(\Omega)} \gtrsim R^{-s}$, and the $\phi_\omega$ are orthogonal, we see that for $i\neq j$
 \begin{equation}
 \begin{split}
 \|\phi_i(x) - \phi_j(x)\|_{L^2(\Omega)} &= \frac{1}{R^d}\left\|\sum_{\phi_\omega\in S_i-S_j}\phi_\omega(x)\right\|_{L^2(\Omega)} \\ &
 \gtrsim \frac{R^{-s}}{R^d}\sqrt{|S_i-S_j|} \geq \frac{R^{-s-\frac{d}{2}}}{2}.
 \end{split}
 \end{equation}
 Thus, we have at least $N = 2^{cR^d}$ elements $\phi_i$ which satisfy $\|\phi_i(x)\|_{\mathcal{B}^s(\Omega)} \leq 1$, and such that every pair differs by at least $\delta \gtrsim R^{-s-\frac{d}{2}}$ in $L^2(\Omega)$. Note that we could also have obtained this from \cite{klusowski2018approximation}, Lemma 1 in section 2.2.
 
 By \eqref{eq_717}
 there exist $\phi_{i,n}\in \Sigma_{n,M}$ which satisfy
 \begin{equation}
  \|\phi_{i,n} - \phi_i\|_{L^2(\Omega)} \leq \frac{\delta}{6},
 \end{equation}
 for an $n$ which satisfies 
 \begin{equation}\label{n-equation}
n \lesssim \delta^{-\frac{2d}{d+2s+2d\epsilon}}\lesssim R^{\frac{d(2s+d)}{2s+d+2d\epsilon}} = R^{d-t},
 \end{equation} 
 where $t(s,d,\epsilon) > 0$.
 
 Let $P_R$ denote the projection onto the space spanned by $S(R)$, i.e. onto the space spanned by the frequencies $e^{2\pi \iu \omega\cdot x}$ for $\omega\in \mathbb{Z}^d$, $|\omega|_\infty \leq R$. 
 Consider the projection $P_R(e^{2\pi \iu \theta\cdot x})$ for $\theta\in \mathbb{R}^d$. We calculate
 \begin{equation}\label{projection_calc}
 \begin{split}
  \|P_R(e^{2\pi \iu \theta\cdot x})\|^2_{L^2(\Omega)} &= \sum_{\substack{\omega\in \mathbb{Z}^d\\ |\omega|_\infty \leq R}} \left|\int_{[0,1]^d} e^{2\pi \iu (\theta - \omega)x}dx\right|^2 \\
  &\leq \frac{1}{(2\pi)^{2d}}\sum_{\substack{\omega\in \mathbb{Z}^d\\ |\omega|_\infty \leq R}}\prod_{i=1}^d\frac{1}{|\theta_i - \omega_i|^2}.
  \end{split}
 \end{equation}
 
 Choose $K$ large enough such that $\|P_R(e^{2\pi \iu \theta\cdot x})\|_{L^2(\Omega)} \leq \frac{\delta}{6nM}$ as long as $|\theta|_\infty \geq K$. By \eqref{projection_calc} and \eqref{n-equation}, this will be guaranteed if
 \begin{equation}
  K \geq R + \frac{6}{(2\pi)^d}\delta^{-1}MnR^{\frac{d}{2}} \lesssim R^{s+2d-t},
 \end{equation}
 and so we can choose $K\lesssim R^{s+2d-t}$.

 We proceed to truncate the $\phi_{i,n}\in \Sigma_{n,M}$ at frequencies with magnitude $K$. In particular, if
 \begin{equation}
  \phi_{i,n} = \sum_{j=1}^n a_{i,j}e^{2\pi \iu \theta_{i,j}\cdot x},
 \end{equation}
 we set $T^K_i = \{j:|\theta_{i,j}|_\infty \leq K\}$ and 
 \begin{equation}
 \phi^K_{i,n} = \sum_{j\in T^K_i} a_{i,j}e^{2\pi \iu \theta_{i,j}\cdot x}.
 \end{equation}
 Our choice of $K$ guarantees that
 \begin{equation}
  \|P_R(\phi_{i,n}^K) - P_R(\phi_{i,n})\|_{L^2(\Omega)} \leq \frac{\delta}{6},
 \end{equation}
 which implies that
 \begin{equation}
 \begin{split}
  \|P_R(\phi_{i,n}^K) - \phi_i\|_{L^2(\Omega)} &\leq \|P_R(\phi_{i,n}^K) - P_R(\phi_{i,n})\|_{L^2(\Omega)} + \|P_R(\phi_{i,n} - \phi_i)\|_{L^2(\Omega)} \\
  &\leq \frac{\delta}{6} + \frac{\delta}{6} = \frac{\delta}{3},
  \end{split}
 \end{equation}
 since $P_r(\phi_i) = \phi_i$.
 
 We now conclude that for $i\neq j$, we have
 \begin{equation}\label{eq_779}
 \begin{split}
  \|\phi_{i,n}^K - \phi_{j,n}^K\|_{L^2(\Omega)} &\geq \|P_R(\phi_{i,n}^K) - P_R(\phi_{j,n}^K)\|_{L^2(\Omega)} \\
  &\geq \|\phi_j - \phi_i\|_{L^2(\Omega)} - \|P_R(\phi_{j,n}^K) - \phi_j\|_{L^2(\Omega)} - \|P_R(\phi_{i,n}^K) - \phi_i\|_{L^2(\Omega)}\\
& \geq \delta - \frac{\delta}{3} - \frac{\delta}{3} = \frac{\delta}{3}.
\end{split}
 \end{equation}
 
 However, on the other hand, we calculate that
 \begin{equation}
  \|e^{2\pi \iu \theta_1\cdot x} - e^{2\pi \iu \theta_2\cdot x}\|^2_{L^2(\Omega)} = \int_{[0,1]^d} |1 - e^{2\pi \iu (\theta_1 - \theta_2)\cdot x}|^2dx \lesssim |\theta_1 - \theta_2|^2.
 \end{equation}
 We now cover the cube $C_K = \{\theta:|\theta|_\infty \leq K\}$ with $N_1$ frequencies $\nu_1,...,\nu_{N_1}$ such that for every $\theta\in C_K$, there exists an $i$ with 
 \begin{equation}
 \|e^{2\pi \iu \theta\cdot x} - e^{2\pi \iu \nu_i\cdot x}\|_{L^2(\Omega)} \leq \frac{\delta}{18nM}.
 \end{equation}
 By the above calculation, this can be done with
 \begin{equation}
  N_1\lesssim (KnM\delta^{-1})^d \lesssim R^{(2s+\frac{5}{2}d-t)d},
 \end{equation}
 where here we have taken into account the dependence of $K$, $n$ and $\delta$ on $R$.
 
 Further, we consider the cube $$A_M = \{\vec{a} = (a_1,...,a_n): |a_i|\leq M\},$$
 which we can cover with $N_2$ elements $\vec{a}_1,...,\vec{a}_{N_2}$ such that for every $\vec{a}\in A_M$, there is an index $i$ with $|\vec{a} - \vec{a}_i|_1 \leq \frac{\delta}{18}$. We can do this with
 \begin{equation}
  N_2 \lesssim (Mn\delta^{-1})^{2n}\lesssim M^{2R^{d-t}}R^{2(s+\frac{3d}{2}-t)R^{d-t}},
 \end{equation}
 where the $2n$ is because the components of $\vec{a}$ can be complex, we have expanded $\delta$ and $n$ in terms of $R$, and used that fact that if each component differs by $\delta/18n$, then the $\ell^1$-norm differs by at most $\delta/18$ as well.

 Given a
 \begin{equation}
 \phi^K_{i,n} = \sum_{j\in T^K_i} a_{i,j}e^{2\pi \iu \theta_{i,j}\cdot x},
 \end{equation}
 we proceed to perturb each of the $\theta_{i,j}$ to one of the frequencies $\nu_i$ and the coefficients $a_{i,j}$ to one of the $\vec{a}_j$. By the preceding analysis, we can thus land at one of
 \begin{equation}
  \bar{N} = N_2N_1^n \lesssim R^{(2s+\frac{5}{2}d-t)dR^{d-t}}M^{2R^{d-t}}R^{2(s+\frac{3d}{2}-t)R^{d-t}}
 \end{equation}
 elements by perturbing $\phi^K_{i,n}$ by at most $M\frac{\delta}{18M} + \frac{\delta}{18} = \frac{\delta}{9}$. Since for $i\neq j$, $\phi^K_{i,n}$ and $\phi^K_{j,n}$ differ by at least $\frac{\delta}{3}$ from \eqref{eq_779}, we see that they must all land at distinct elements after this perturbation. This implies that
 \begin{equation}
  N = 2^{cR^d}\leq \bar{N} \lesssim R^{(2s+\frac{5}{2}d-t)dR^{d-t}}M^{2R^{d-t}}R^{2(s+\frac{3d}{2}-t)R^{d-t}}.
 \end{equation}
 Taking the logarithm and keeping only the dependence on $R$, we get
 \begin{equation}
  R^d \lesssim (\log(R) + 1)R^{d-t},
 \end{equation}
 which yields a contradiction by taking $R\rightarrow \infty$, since $t > 0$.

\end{proof}

Finally, we consider lower bounds for ReLU$^k$ notworks. In the following theorem, we show that the maximal rate of $k-m+1$ obtained in Theorem \ref{piecewise-poly-approx-theorem} cannot be improved upon when approximating from $\Sigma^k_{n}$ regardless of the level of smoothness $s$. The argument is relatively straightforward and simply reduces to the one-dimensional case.

\begin{theorem}\label{relu-lower-bound}
 Let $\Omega = [0,1]^d$, $s\geq 0$, $k\in \mathbb{Z}_{\geq 0}$, and $0\leq m\leq s$. 
 Then there is a function $f\in \mathcal{B}^s(\Omega)$, such that
 \begin{equation}
  \inf_{f_n\in \Sigma^k_{n}} \|f - f_n\|_{H^m(\Omega)} \gtrsim n^{m-(k+1)}.
 \end{equation}
\end{theorem}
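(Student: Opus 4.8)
The plan is to reduce the claim to a one-dimensional lower bound for nonlinear (free-knot) piecewise polynomial approximation by choosing $f$ to depend on a single coordinate. I would take $f(x) = e^{x_1}$ on $\Omega = [0,1]^d$. This lies in $\mathcal{B}^s(\Omega)$ for every $s \ge 0$: picking $\rho \in C_c^\infty(\mathbb{R})$ with $\rho \equiv 1$ on $[0,1]$ and setting $f_e(x) = e^{x_1}\prod_{i=1}^d \rho(x_i)$ gives an extension $f_e \in C_c^\infty(\mathbb{R}^d) \subset \mathcal{S}(\mathbb{R}^d)$ with $f_e|_\Omega = f$, so $\widehat{f_e} \in \mathcal{S}(\mathbb{R}^d)$ and $\int_{\mathbb{R}^d}(1 + |\xi|)^s|\widehat{f_e}(\xi)|\,d\xi < \infty$. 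Since $f$ is real-valued we may restrict attention to $f_n \in \Sigma^k_n$ with real coefficients.

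The heart of the matter is a one-dimensional estimate: if $h \in C^{r+1}([0,1])$ with $h^{(r+1)}$ of constant sign and $|h^{(r+1)}| \ge 1$ on $[0,1]$, then every piecewise polynomial $u$ of degree $\le r$ with at most $n$ breakpoints on $[0,1]$ satisfies $\|h - u\|_{L^2([0,1])} \gtrsim n^{-(r+1)}$, with constant depending only on $r$. I would prove this in two steps. First, for an arbitrary subinterval $I = [a,a+\ell] \subseteq [0,1]$, I claim $E_r(h,I)_{L^2(I)} \gtrsim \ell^{r + 3/2}$, where $E_r(h,I)_{L^2(I)}$ denotes the error of best $L^2(I)$-approximation by polynomials of degree $\le r$. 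By rescaling $I$ onto $[0,1]$ this reduces to $E_r(h)_{L^2([0,1])} \gtrsim 1$, which I would get by pairing $h$ with the normalized degree-$(r+1)$ Legendre polynomial on $[0,1]$: writing the latter via Rodrigues' formula as a nonzero multiple of $\frac{d^{r+1}}{dt^{r+1}}\big[(t(1-t))^{r+1}\big]$ and integrating by parts $r+1$ times (all boundary terms vanish, since $(t(1-t))^{r+1}$ has a zero of order $r+1$ at each endpoint), the inner product equals a nonzero multiple of $\int_0^1 h^{(r+1)}(t)(t(1-t))^{r+1}\,dt$, whose absolute value is at least $\int_0^1 (t(1-t))^{r+1}\,dt > 0$ because $h^{(r+1)}$ has constant sign with $|h^{(r+1)}| \ge 1$. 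Second, given this subinterval bound, on each piece $I_j$ of $u$ (there are at most $n+1$ of them, partitioning $[0,1]$) we have $\|h - u\|_{L^2(I_j)} \ge E_r(h,I_j)_{L^2(I_j)} \gtrsim |I_j|^{r + 3/2}$, so $\|h - u\|_{L^2([0,1])}^2 = \sum_j \|h-u\|_{L^2(I_j)}^2 \gtrsim \sum_j |I_j|^{2r+3}$, and the power-mean inequality gives $\sum_j |I_j|^{2r+3} \ge (n+1)^{-(2r+2)}\big(\sum_j |I_j|\big)^{2r+3} = (n+1)^{-(2r+2)}$, which yields the claim.

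To assemble the proof, I first treat integer $m$ with $0 \le m \le k$ (for general $m < k+\tfrac12$ one slices the Gagliardo seminorm instead and argues analogously; this is the range in which $\Sigma^k_n \subset H^m(\Omega)$ and $m-(k+1)$ is negative). Fix any $f_n \in \Sigma^k_n$; then $f_n \in H^m(\Omega)$ and
\begin{equation*}
 \|f - f_n\|_{H^m(\Omega)}^2 \;\ge\; \|\partial_1^m(f - f_n)\|_{L^2(\Omega)}^2 \;=\; \int_{[0,1]^{d-1}} \Big\|\tfrac{d^m}{dt^m}\big(f(\,\cdot\,,y) - f_n(\,\cdot\,,y)\big)\Big\|_{L^2([0,1])}^2\,dy .
\end{equation*}
For fixed $y = (x_2,\dots,x_d)$, writing $\omega_i = (\omega_{i,1},\omega_i')$, the slice $t \mapsto f_n(t,y) = \sum_{i=1}^n a_i\,\sigma_k(\omega_{i,1}t + \omega_i'\cdot y + b_i)$ is a piecewise polynomial of degree $\le k$ in $t$ with at most $n$ breakpoints (each term with $\omega_{i,1}\ne 0$ contributes one; the others are constant in $t$), so its $m$-th derivative is a piecewise polynomial of degree $\le k - m$ with at most $n$ breakpoints, while $\frac{d^m}{dt^m}f(t,y) = e^t$ has $(k-m+1)$-st derivative $e^t \ge 1$ of constant sign. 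The one-dimensional estimate with $r = k - m$ and $h = e^t$ then bounds the inner norm below by a constant times $n^{-(k-m+1)}$, uniformly in $y$; integrating over $y \in [0,1]^{d-1}$ and taking square roots gives $\|f - f_n\|_{H^m(\Omega)} \gtrsim n^{-(k-m+1)} = n^{m-(k+1)}$, uniformly over $f_n \in \Sigma^k_n$, which is the assertion.

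The main obstacle is the one-dimensional lower bound of the second paragraph. Although it is classical — it is the lower bound matching the $n^{-(k+1)}$ approximation rate of free-knot splines of degree $k$ on $C^{k+1}$ functions, and is close in spirit to the finite-element lower bound of \cite{lin2014lower} cited in the introduction — its proof requires the uniform polynomial-approximation estimate over \emph{all} subintervals (not merely over a single interval of length $\gtrsim 1/n$) together with the convexity argument, and care that the $L^2$, rather than $L^\infty$, norm is used throughout. The reduction to one dimension via slicing and the verification that $f \in \mathcal{B}^s(\Omega)$ are routine.
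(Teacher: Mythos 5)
Your proof is correct, and the overall strategy coincides with the paper's: pick a function of the single variable $x_1$ that lies in $\mathcal{B}^s(\Omega)$ for all $s$, observe that each slice $t\mapsto f_n(t,y)$ of an element of $\Sigma^k_n$ is a free-knot piecewise polynomial of degree $\le k$ with at most $n$ breakpoints, and integrate a one-dimensional lower bound over the remaining variables. The only substantive difference is how the one-dimensional lower bound is obtained. The paper simply invokes the result of \cite{lin2014lower}, which gives $\gtrsim n^{-(k-m+1)}$ for any target that is not itself a piecewise polynomial; you instead prove the needed estimate from scratch, via the Rodrigues/Legendre duality argument $E_r(h,I)_{L^2(I)}\gtrsim |I|^{r+3/2}$ for functions with $|h^{(r+1)}|\ge 1$ of constant sign, followed by the power-mean inequality over the at most $n+1$ pieces. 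Your version is self-contained and completely explicit in its constants, at the price of requiring the sign condition on $h^{(k-m+1)}$ — which is why you switch from the paper's choice $f=e^{2\pi\iu x_1}$ to $f=e^{x_1}$ (for the complex exponential one would have to argue separately on the real and imaginary parts on subintervals where the relevant derivative keeps a fixed sign, or take absolute values inside the Legendre pairing). Both choices lie in $\mathcal{B}^s(\Omega)$ and both yield the stated rate. One point you flag yourself and should not gloss over in a final write-up: the statement allows non-integer $m$ (and, as written, even $m>k$), and your slicing identity $\|\partial_1^m(\cdot)\|_{L^2}^2=\int\|\tfrac{d^m}{dt^m}(\cdot)\|^2_{L^2(dx_1)}\,dy$ is only literal for integer $m\le k$; the paper is equally terse on this, so this is not a gap relative to the paper's own argument, but the Gagliardo-seminorm extension you allude to would need to be carried out to cover the full range claimed.
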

\begin{proof}
Consider the function
 \begin{equation}
  f(x) = e^{2\pi \iu x_1}\in \mathcal{B}^s(\Omega).
 \end{equation}
 This is a function of only one coordinate, which allows us to extend the lower bound in one dimension obtained in \cite{lin2014lower} to higher dimensions. 
 Specifically, we make the following simple observation,
 \begin{equation}
  \|f - f_n\|^2_{H^m(\Omega)} \geq \int_{\mathbb{R}^{d-1}} \|e^{2\pi \iu x_1} - f_n(x_1,x_{>1})\|^2_{H^m([0,1],dx_1)} dx_{>1}.
 \end{equation}
 This holds since derivatives with respect to variables other than $x_1$ will only increase the norm.
 
 Now, for each possible value of $x_{>1}$, $f_n(\cdot,x_{>1})$ is a one-dimensional piecewise polynomial function with at most $n$ breakpoints. Since $e^{2\pi \iu x_1}$ is not a piecewise polynomial function, the results in \cite{lin2014lower} imply that
 \begin{equation}
  \|e^{2\pi \iu x_1} - f_n(x_1,x_{>1})\|^2_{H^m([0,1],dx_1)} \gtrsim n^{-2(k-m+1)}.
 \end{equation}
 So we get
 \begin{equation}
 \begin{split}
  \|f - f_n\|^2_{H^m(\Omega)} &\geq \int_{\mathbb{R}^{d-1}} \|e^{2\pi \iu x_1} - f_n(x_1,x_{>1})\|^2_{H^m([0,1],dx_1)} dx_{>1} \\
  & \gtrsim n^{-2(k-m+1)},
  \end{split}
 \end{equation}
 as desired.

\end{proof}

\section{Conclusion}
We have shown that the approximation rates of neural networks with a cosine activation function or powers of a rectified linear unit as an activation function can be significantly improved beyond $O(n^{-\frac{1}{2}})$ for sufficiently smooth functions. In relation to the finite element method, this shows that a highly addaptive grid can lead to a significantly improved approximation rate for low degree piecewise polynomial functions. 

Further work which remains is understanding how much the approximation rates can be further improved when utilizing deeper networks. In particular, we believe that our techniques can be combined with the methods in \cite{bresler2020sharp,daubechies2019nonlinear} to obtain better rates for approximation of higher dimensional functions by deep neural networks.

\section{Acknowlegements}
We would like to thank Professors Weinan E, Ronald DeVore, and Russel Caflisch for helpful discussions, and Professor Yeonjong Shin for helpful comments on a draft of the manuscript. This work was supported by the Verne M. Willaman Fund, and the National Science Foundation (Grant No. DMS-1819157).

\bibliographystyle{spmpsci}
\bibliography{refs}

\end{document}